\documentclass[12pt]{article}
\usepackage{amsmath, amsthm}
\usepackage{enumerate}
\usepackage{amssymb}
\usepackage{geometry}
\geometry{
	a4paper,
	total={170mm,257mm},
	left=20mm,
	top=20mm,
}
\newtheorem{theorem}{Theorem}[section]

\newtheorem{corollary}[theorem]{Corollary}

\newtheorem{definition}[theorem]{Definition}
\newtheorem{example}[theorem]{Example}

\newtheorem{lemma}[theorem]{Lemma}

\newtheorem{proposition}[theorem]{Proposition}
\newtheorem{remark}[theorem]{Remark}

\DeclareMathOperator{\fc}{\xrightarrow[]{mo}}

\DeclareMathOperator{\mc}{\xrightarrow[]{mn}}

\DeclareMathOperator{\uoc}{\xrightarrow[]{uo}}
\DeclareMathOperator{\oc}{\xrightarrow[]{o}}
\DeclareMathOperator{\nc}{\xrightarrow{\lVert \cdot \rVert}}

\newcommand{\eval}[2][\right]{\relax
 \ifx#1\right\relax \left.\fi#2#1\rvert}

\begin{document}

\title{\bf Multiplicative norm convergence in Banach lattice $f$-algebras} 
\maketitle

\author{\centering Abdullah Ayd{\i}n \\ \bigskip \small 
	Department of Mathematics, Mu\c{s} Alparslan University, Mu\c{s}, Turkey \\}

\bigskip

\abstract{A net $(x_\alpha)$ in an $f$-algebra $E$ is called multiplicative order convergent to $x\in E$ if $\lvert x_\alpha-x\rvert u\oc 0$ for all $u\in E_+$. This convergence has been investigated and applied in a recent paper by Ayd{\i}n \cite{AAydn}. In this paper, we study a variation of this convergence for Banach lattice $f$-algebras. A net $(x_\alpha)$ in a Banach lattice $f$-algebra $E$ is said to be multiplicative norm convergent to $x\in E$ if $\lvert \lvert x_\alpha-x\rvert u\rVert\to 0$ for each $u\in E_+$. We study on this concept and investigate its relationship with the other convergences, and also we introduce the $mn$-topology Banach lattice $f$-algebras.}

\bigskip
\let\thefootnote\relax\footnotetext
{Keywords: $mn$-convergence, Banach lattice $f$-algebra, $mo$-convergence
	
\text{2010 AMS Mathematics Subject Classification:} Primary 46A40; Secondary 46E30.

e-mail: a.aydin@alparslan.edu.tr}

\section{Introductory Facts}
Let us recall some notations and terminologies used in this paper. An ordered vector space $E$ is said to be {\em vector lattice} (or, {\em Riesz space}) if, for each pair of vectors $x,y\in E$, the supremum $x\vee y=\sup\{x,y\}$ and the infimum $x\wedge y=\inf\{x,y\}$ both exist in E. For $x\in E$, $x^+:=x\vee 0$, $x^-:=(-x)\vee0$, and $\lvert x\rvert:=x\vee(-x)$ are called the {\em positive} part, the {\em negative} part, and the {\em absolute value} of $x$, respectively. A vector lattice $E$ is called \textit{order complete} if $0\leq x_\alpha\uparrow\leq x$ implies the existence of $\sup{x_\alpha}$. A partially ordered set $A$ is called {\em directed} if, for each $a_1,a_2\in A$, there is another $a\in A$ such that $a\geq a_1$ and $a\geq a_2$ (or, $a\leq a_1$ and $a\leq a_2$). A function from a directed set $A$ into a set $E$ is called a {\em net} in $E$. A net $(x_\alpha)_{\alpha\in A}$ in a vector lattice $E$ is\textit{ order convergent} (or \textit{$o$-convergent}, for short) to $x\in E$, if there exists another net $(y_\beta)_{\beta\in B}$ satisfying $y_\beta \downarrow 0$, and for any $\beta\in B$ there exists $\alpha_\beta\in A$ such that $|x_\alpha-x|\leq y_\beta$ for all $\alpha\geq\alpha_\beta$. In this case, we write $x_\alpha\xrightarrow{o} x$. A vector $e\geq 0$ in a vector lattice $E$ is said to be a \textit{weak order unit} whenever the band generated by $e$ satisfies $B_e=E$, or equivalently, whenever for each $x\in E_+$ we have $x\wedge ne\uparrow x$; see more information for example (\cite{AB}, \cite{ABPO},\cite{AAyd},\cite{AGG}, \cite{Vul}, \cite{Za}).

A vector lattice $E$ under an associative multiplication is said to be a \textit{Riesz algebra} whenever the multiplication makes $E$ an algebra (with the usual properties), and in addition, it satisfies the following property: $xy\in E_+$ for every $x,y\in E_+$. A Riesz algebra $E$ is called \textit{commutative} if $xy=yx$ for all $x,y\in E$. A Riesz algebra $E$ is called \textit{$f$-algebra} if $E$ has additionally property that $x\wedge y=0$ implies $(xz)\wedge y=(zx)\wedge y=0$ for all $z\in E_+$. A vector lattice $E$ is called \textit{Archimedean} whenever $\frac{1}{n}x\downarrow 0$ holds in $E$ for each $x\in E_+$. Every Archimedean $f$-algebra is commutative; see Theorem 140.10 \cite{Za}. Assume $E$ is an Archimedean $f$-algebra with a multiplicative unit vector $e$. Then, by applying Theorem 142.1(v) \cite{Za}, in view of $e=ee=e^2\geq0$, it can be seen that $e$ is a positive vector. On the other hand, since $e\wedge x=0$ implies $x=x\wedge x= (xe)\wedge x= 0$, it follows that $e$ is a weak order unit. In this article, unless otherwise, all vector lattices are assumed to be real and Archimedean, and so $f$-algebras are commutative.

Recall that a net $(x_\alpha)$ in an $f$-algebra $E$ is called {\em multiplicative order convergent} (or shortly, {\em $mo$-convergent}) to $x\in E$ if $\lvert x_\alpha-x\rvert u\oc 0$ for all $u\in E_+$. Also, it is called {\em $mo$-Cauchy} if the net $(x_\alpha-x_{\alpha'})_{(\alpha,\alpha') \in A\times A}$ $mo$-converges to zero. $E$ is called {\em $mo$-complete} if every $mo$-Cauchy net in $E$ is $mo$-convergent, and is called {\em $mo$-continuous} if $x_\alpha\oc0$ implies $x_\alpha\fc 0$; see for detail information \cite{AAydn}. On the other hand, a net $(x_\alpha)$ in a Banach lattice $E$ is {\em unbounded norm convergent} (or {\em un-convergent}) to $x\in E$ if $\lvert \lvert x_\alpha-x\rvert \wedge u\rVert\to0$ for all $u\in E_+$; see \cite{Try}. We routinely use the following fact: $y\leq x$ implies $uy\leq ux$ for all positive elements $u$ in $f$-algebras. Moreover, an $f$-algebra $E$ which is at the same time a Banach lattice is called a {\em Banach lattice $f$-algebra} whenever $\lvert xy\rVert\leq \Vert x\rVert\Vert y\rVert$ holds for all $x,y\in E$. Motivated from above definitions, we give the following notion.

\begin{definition}
	A net $(x_\alpha)$ in a Banach lattice $f$-algebra $E$ is said to be {\em multiplicative norm convergent} (or shortly, {\em $mn$-convergent}) to $x\in E$ if $\lvert \lvert x_\alpha-x\rvert u\rVert\to 0$ for all $u\in E_+$. Abbreviated as $x_\alpha\mc x$. If the condition holds only for sequences then it is called sequentially $mn$-convergence.
\end{definition}

\begin{remark}\ \label{rem 1 basic}
	\begin{enumerate}
		\item[(i)] For a net $(x_\alpha)$ in a Banach lattice $f$-algebra $E$, $x_\alpha\mc x$ implies $x_\alpha y\mc xy$ for all $y\in E$ because of $\lVert\lvert x_\alpha y-xy\rvert u\rVert=\lVert \lvert x_\alpha-x\rvert\lvert y\rvert u\rVert$ for all $u\in E_+$. The converse holds true in Banach lattice $f$-algebras with the multiplication unit. Indeed, assume $x_\alpha y\mc xy$ for each $y\in E$. Fix $u\in E_+$. So, $\lVert \lvert x_\alpha-x\rvert u\rVert=\lVert \lvert x_\alpha e-xe\rvert u\rVert\mc 0$. 
		
		\item[(ii)] In Banach lattice $f$-algebras, the norm convergence implies the $mn$-convergence. Indeed, by considering the inequality $\lVert \lvert x_\alpha-x\rvert u\rVert\leq \lVert x_\alpha-x\rVert\lVert u\rVert$ for any net $x_\alpha\mc x$, we can get the desired result.
		
		\item[(iii)] If a net $(x_\alpha)$ is order Cauchy and $x_\alpha\mc x$ in a Banach lattice $f$-algebra then we have $x_\alpha\fc x$. Indeed, since order Cauchy norm convergent net is order convergent to its norm limit, we can get the desired result.
		
		\item[(iv)] In order continuous Banach lattice $f$-algebras, both the (sequentially) order convergence and the (sequentially) $mo$-convergence imply the (sequentially) $mn$-convergence.
		
		\item[(v)] In atomic and order continuous Banach lattice $f$-algebras, a sequence which is order bounded and $mn$-convergent to zero is sequentially $mo$-convergent to zero; see Lemma 5.1 \cite{DOT}.
		
		\item[(vi)] For an $mn$-convergent to zero sequence $(x_n)$ in a Banach lattice $f$-algebra, there is a subsequence $(x_{n_k})$ which sequentially $mo$-converges to zero; see Lemma 3.11 \cite{GX}.
	\end{enumerate}
\end{remark} 

\begin{example}\label{int exam}
	Let $E$ be a Banach lattice. Fix an element $x\in E$. Then the principal ideal $I_x=\{y\in E: \exists \lambda>0 \ \text{with} \ \lvert y\rvert\leq \lambda x\}$, generated by $x$ in $E$ under the norm $\lVert\cdot\rVert_\infty$ which is defined by	$\lVert y\rVert_\infty=\inf\{\lambda>0:\lvert y\rvert\leq \lambda x\}$, is an $AM$-space; see Theorem 4.21 \cite{ABPO}. 
	
	Recall that a vector $e>0$ is called order unit whenever for each $x$ there exists some $\lambda>0$ with $\lvert x\rvert\leq \lambda e$. Thus, we have $(I_x,\lVert\cdot\rVert_\infty)$ is $AM$-space with the unit $\lvert x\rvert$. Since every $AM$-space with unit, besides being a Banach lattice, has also an $f$-algebra structure. So, we can say that $(I_x,\lVert\cdot\rVert_\infty)$ is a Banach lattice $f$-algebra. Therefore, for a net $(x_\alpha)$ in $I_x$ and $y\in I_x$, by applying Corollary 4.4 \cite{ABPO}, we get $x_\alpha \mc y$ in the original norm of $E$ on $I_x$ iff $x_\alpha\mc y$ in the norm $\lVert\cdot\rVert_\infty$. In particular, take $x$ as the unit element $e$ of $E$. Then we have $E_e=E$. Thus, for a net $(x_\alpha)$ in $E$, we have $x_\alpha \mc y$ in the $(E,\lVert\cdot\rVert_\infty)$ iff $x_\alpha \mc y$ in the $(E,\lVert\cdot\rVert)$.
\end{example}
\section{Main Results}
We begin the section with the next list of properties of $mn$-convergence which follows directly from the inequalities $\lvert x-y\rvert \leq \lvert x-x_\alpha\rvert +\lvert x_\alpha-y\rvert $ and $\lvert \lvert x_\alpha\rvert -\lvert x\rvert \rvert \leq\lvert x_\alpha-x\rvert $.
\begin{lemma}
	Let $(x_\alpha)$ and $(y_\alpha)$ be two nets in a Banach lattice $f$-algebra $E$. Then the following holds true:
	\begin{enumerate}
		\item[(i)] $x_\alpha\mc x$ iff $(x_\alpha- x)\mc 0$ iff $\lvert x_\alpha-x\rvert \mc 0$;
		\item[(ii)] if $x_\alpha\mc x$ then $y_\beta\mc x$ for each subnet $(y_\beta)$ of $(x_\alpha)$;
		\item[(iii)] suppose $x_\alpha\mc x$ and $y_\beta\mc y$, then $ax_\alpha+by_\beta\mc ax+by$ for any $a,b\in \mathbb{R}$;
		\item[(iv)] if $x_\alpha \mc x$ and $x_\alpha \mc y$ then $x=y$; 
		\item[(v)] if $x_\alpha \mc x$ then $\lvert x_\alpha\rvert \mc \lvert x \rvert$.
	\end{enumerate}
\end{lemma}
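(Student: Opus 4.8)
The plan is to derive all five items from the two inequalities displayed just before the statement, together with two standing structural facts: multiplication by a positive element preserves order (so $a\le b$ gives $ua\le ub$ for $u\in E_+$), and the lattice norm is monotone and subadditive (so $0\le a\le b$ gives $\lVert a\rVert\le\lVert b\rVert$, while $\lVert a+b\rVert\le\lVert a\rVert+\lVert b\rVert$). Items (i)--(iii) and (v) are then routine, and the only point carrying genuine content is the passage to a single limit in (iv).

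For (i) I would simply unwind the definition: for every $u\in E_+$ the three scalar nets $\lVert\lvert x_\alpha-x\rvert u\rVert$, $\lVert\lvert(x_\alpha-x)-0\rvert u\rVert$ and $\lVert\lvert\,\lvert x_\alpha-x\rvert-0\rvert u\rVert$ coincide, since $\lvert\,\lvert x_\alpha-x\rvert\rvert=\lvert x_\alpha-x\rvert$; hence the three convergences are literally the same statement. For (v) I would multiply the reverse triangle inequality $\lvert\,\lvert x_\alpha\rvert-\lvert x\rvert\rvert\le\lvert x_\alpha-x\rvert$ by $u\in E_+$, giving $\lvert\,\lvert x_\alpha\rvert-\lvert x\rvert\rvert u\le\lvert x_\alpha-x\rvert u$ with both sides positive, and then apply monotonicity of the norm to get $\lVert\lvert\,\lvert x_\alpha\rvert-\lvert x\rvert\rvert u\rVert\le\lVert\lvert x_\alpha-x\rvert u\rVert\to0$. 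For (iii), working on the product index set, I would bound $\lvert(ax_\alpha+by_\beta)-(ax+by)\rvert u\le\lvert a\rvert\lvert x_\alpha-x\rvert u+\lvert b\rvert\lvert y_\beta-y\rvert u$, take norms, and use subadditivity and homogeneity so that the right-hand side splits into $\lvert a\rvert\lVert\lvert x_\alpha-x\rvert u\rVert+\lvert b\rvert\lVert\lvert y_\beta-y\rvert u\rVert$, each summand tending to $0$. Item (ii) is purely topological: for fixed $u$ the real net $\lVert\lvert y_\beta-x\rvert u\rVert$ is a subnet of $\lVert\lvert x_\alpha-x\rvert u\rVert\to0$, hence converges to $0$ as well.

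The heart of the matter is (iv). Applying the triangle inequality $\lvert x-y\rvert\le\lvert x-x_\alpha\rvert+\lvert x_\alpha-y\rvert$, multiplying by $u\in E_+$ and taking norms yields $\lVert\lvert x-y\rvert u\rVert\le\lVert\lvert x_\alpha-x\rvert u\rVert+\lVert\lvert x_\alpha-y\rvert u\rVert$, and since $x_\alpha\mc x$ and $x_\alpha\mc y$ both terms on the right tend to $0$; as the left-hand side does not depend on $\alpha$, it must vanish, so $\lVert\lvert x-y\rvert u\rVert=0$, i.e. $\lvert x-y\rvert u=0$, for every $u\in E_+$. The remaining, and genuinely delicate, step is to conclude $x=y$ from this. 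The clean route when a multiplicative unit $e$ is available is to take $u=e$, giving $\lvert x-y\rvert=\lvert x-y\rvert e=0$ at once. In the absence of a unit I would instead specialize to $u=\lvert x-y\rvert$, obtaining $\lvert x-y\rvert^2=0$, and then invoke the fact that an Archimedean $f$-algebra carries no nonzero nilpotent positive elements, so that $\lvert x-y\rvert=0$ and therefore $x=y$. I expect this last implication to be the main obstacle, since it is exactly the place where mere $f$-algebra arithmetic is not enough and one must lean on semiprimeness (equivalently, on the presence of the weak order unit supplied by the multiplicative identity); the other four items require nothing beyond the order-compatibility of the norm and of multiplication.
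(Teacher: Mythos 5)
Your proposal is correct and follows exactly the route the paper intends: the paper omits the proof entirely, asserting the lemma ``follows directly'' from the two displayed inequalities $\lvert x-y\rvert\leq\lvert x-x_\alpha\rvert+\lvert x_\alpha-y\rvert$ and $\lvert\,\lvert x_\alpha\rvert-\lvert x\rvert\,\rvert\leq\lvert x_\alpha-x\rvert$, and your write-up is a faithful elaboration of precisely that. The one place you add substance is (iv), where you rightly observe that $\lvert x-y\rvert u=0$ for all $u\in E_+$ yields $x=y$ only after taking $u=\lvert x-y\rvert$ and invoking the absence of nonzero positive nilpotents (equivalently the presence of the multiplicative unit) --- a step the paper silently elides here but itself relies on later, via Theorem 142.3 of Zaanen, when proving the $mn$-topology is Hausdorff.
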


The lattice operations in Banach lattice $f$-algebras are $mn$-continuous in the following sense.
\begin{proposition}\label{LO are $mn$-continuous}
	Let $(x_\alpha)_{\alpha \in A}$ and $(y_\beta)_{\beta \in B}$ be two nets in a Banach lattice $f$-algebra $E$. If $x_\alpha\mc x$ and $y_\beta\mc y$ then $(x_\alpha\vee y_\beta)_{(\alpha,\beta)\in A\times B} \mc x\vee y$. In particular, $x_\alpha^+\mc x^+$.
\end{proposition}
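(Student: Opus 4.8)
The plan is to reduce the statement to the Birkhoff inequality for lattice operations, $\lvert a\vee b-c\vee d\rvert\leq \lvert a-c\rvert+\lvert b-d\rvert$, which holds in every vector lattice. Applying it with $a=x_\alpha$, $b=y_\beta$, $c=x$, $d=y$ gives
\[
\lvert x_\alpha\vee y_\beta-x\vee y\rvert\leq \lvert x_\alpha-x\rvert+\lvert y_\beta-y\rvert .
\]

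First I would fix an arbitrary $u\in E_+$ and multiply this inequality by $u$. Since the $f$-algebra is commutative and, as recorded in the paper, $p\leq q$ implies $up\leq uq$ for positive $u$, multiplication by $u$ preserves the order and yields
\[
\lvert x_\alpha\vee y_\beta-x\vee y\rvert\, u\leq \lvert x_\alpha-x\rvert\, u+\lvert y_\beta-y\rvert\, u .
\]
Next I would pass to norms. The left-hand side is a positive element, so monotonicity of the Banach lattice norm together with the triangle inequality gives
\[
\lVert \lvert x_\alpha\vee y_\beta-x\vee y\rvert\, u\rVert\leq \lVert \lvert x_\alpha-x\rvert\, u\rVert+\lVert \lvert y_\beta-y\rvert\, u\rVert .
\]

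Since $x_\alpha\mc x$ and $y_\beta\mc y$, each summand on the right converges to $0$ (the first along $A$, the second along $B$), so their sum converges to $0$ along the product directed set $A\times B$; this forces the left-hand side to $0$, which is precisely $x_\alpha\vee y_\beta\mc x\vee y$. The final assertion then follows by taking $(y_\beta)$ to be the constant net $0$ with limit $y=0$, whence $x_\alpha\vee 0=x_\alpha^+\mc x^+=x\vee 0$.

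I would regard the only delicate point---more bookkeeping than genuine obstacle---as the passage to the product net: one must check that the sum of two nets, each tending to $0$ along its own factor, tends to $0$ along $A\times B$. This is routine, since for any $\varepsilon>0$ one chooses indices in each factor beyond which the corresponding term is below $\varepsilon/2$, and the product order then makes both estimates hold simultaneously.
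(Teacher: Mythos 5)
Your proof is correct and follows essentially the same route as the paper's: the paper splits $\lvert x_\alpha\vee y_\beta-x\vee y\rvert$ through the intermediate term $x_\alpha\vee y$ and applies the one-variable inequality $\lvert a\vee b-a\vee c\rvert\leq\lvert b-c\rvert$ twice, which is exactly the two-variable Birkhoff inequality you invoke directly, after which both arguments multiply by $u$, use monotonicity of the norm, and pass to the product net. Your version is, if anything, slightly cleaner in that you fix $u$ before choosing the indices $\alpha_0,\beta_0$ (the dependence of these indices on $u$ is harmless but worth making explicit, as you do).
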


\begin{proof}
	Assume $x_\alpha\mc x$ and $y_\beta\mc y$. Then, for a given $\varepsilon>0$, there exist indexes $\alpha_0\in A$ and $\beta_0\in B$ such that $\lVert\lvert x_\alpha-x\rvert u\rVert\leq \frac{1}{2}\varepsilon$ and $\lVert\lvert y_\beta-y\rvert u\rVert\leq \frac{1}{2}\varepsilon$ for every $u\in E_+$ and for all $\alpha\geq\alpha_0$ and $\beta\geq\beta_0$. It follows from the inequality $\vert a\vee b-a\vee c\rvert\leq \lvert b-c\rvert$ in vector lattices that 	
	\begin{equation*}
	\begin{split}
	\lVert\lvert x_\alpha \vee y_\beta - x\vee y\rvert u\rVert&\leq \lVert\lvert x_\alpha \vee y_\beta -x_\alpha \vee y\rvert u+\lvert x_\alpha \vee y- x\vee y\rvert u\rVert\\ &\leq \lVert\lvert y_\beta -y\rvert u\rVert+\lVert\lvert x_\alpha-x\rvert u\rVert\leq \frac{1}{2}\varepsilon+\frac{1}{2}\varepsilon=\varepsilon
	\end{split}
	\end{equation*}
	for all $\alpha\geq\alpha_0$ and $\beta\geq\beta_0$ and for every $u\in E_+$. That is, $(x_\alpha\vee y_\beta)_{(\alpha,\beta)\in A\times B} \mc x\vee y$.
\end{proof}

The following proposition is similar to Proposition 2.7 \cite{AAydn}, and so we omit its proof.
\begin{proposition}
	Let $E$ be a Banach lattice $f$-algebra, $B$ be a projection band of $E$ and $P_B$ be the corresponding band projection. Then $x_\alpha\mc x$ in $E$ implies $P_B(x_\alpha)\mc P_B(x)$ in both $E$ and $B$.
\end{proposition}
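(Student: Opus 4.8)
The plan is to reduce everything to two monotonicity facts already available in this setting: that multiplication by a positive element preserves order (stated in the excerpt), and that the lattice norm is monotone. The only structural input about $P_B$ I need is that a band projection is a positive linear projection with $0\le P_B\le I$ that commutes with the modulus. First I would use linearity to write $P_B(x_\alpha)-P_B(x)=P_B(x_\alpha-x)$, and then invoke the standard fact that every band projection satisfies $\lvert P_B(z)\rvert=P_B(\lvert z\rvert)$ (because $P_B z$ and $(I-P_B)z$ are disjoint). Applying this to $z=x_\alpha-x$ gives $\lvert P_B(x_\alpha)-P_B(x)\rvert=P_B(\lvert x_\alpha-x\rvert)$.

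Next I would fix $u\in E_+$ and put $w:=\lvert x_\alpha-x\rvert\ge 0$. Since $P_B$ is a band projection, $0\le P_B(w)\le w$; because $E$ is an $f$-algebra, the inequality $P_B(w)\le w$ may be multiplied by the positive element $u$ to yield $0\le P_B(w)\,u\le w\,u$. Monotonicity of the lattice norm then gives $\lVert P_B(w)\,u\rVert\le \lVert w\,u\rVert=\lVert\lvert x_\alpha-x\rvert u\rVert$. By the hypothesis $x_\alpha\mc x$ the right-hand side tends to $0$, so $\lVert\lvert P_B(x_\alpha)-P_B(x)\rvert u\rVert\to 0$ for every $u\in E_+$; that is, $P_B(x_\alpha)\mc P_B(x)$ in $E$.

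Finally, for convergence in $B$ I would observe that $B$, being a projection band of the Banach lattice $f$-algebra $E$, is norm-closed and is itself a Banach lattice $f$-algebra under the restricted order, the restricted norm, and (since bands in an $f$-algebra are ring ideals, so that $B\cdot B\subseteq B$) the restricted multiplication. As $P_B$ maps into $B$ and $B_+\subseteq E_+$, testing the convergence just established against elements $u=v\in B_+$, and noting that both the products and the norms computed in $B$ agree with those computed in $E$, gives $\lVert\lvert P_B(x_\alpha)-P_B(x)\rvert v\rVert\to 0$ for all $v\in B_+$, i.e. $P_B(x_\alpha)\mc P_B(x)$ in $B$.

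I expect the only genuinely delicate point to be this last passage to $B$: one must check that the algebra structure really restricts (that $B\cdot B\subseteq B$ with the product agreeing with the one in $E$) and that the norm of $B$ is the restriction of the norm of $E$, so that $mn$-convergence in $E$ tested with elements of $B_+$ coincides with $mn$-convergence in $B$. The multiplicative estimate in the middle paragraph is the computational core, but it follows at once from the stated monotonicity rules, so the overall argument is short.
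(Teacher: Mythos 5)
Your proof is correct, and it follows exactly the route the paper intends: the paper omits the argument, referring to the $mo$-convergence analogue (Proposition 2.7 of the cited Ayd{\i}n preprint), whose proof rests on the same domination $\lvert P_B(x_\alpha)-P_B(x)\rvert=P_B(\lvert x_\alpha-x\rvert)\leq\lvert x_\alpha-x\rvert$ followed by multiplication with $u\in E_+$ and monotonicity of the norm. Your extra care about the passage to $B$ (norm-closedness of bands, $B\cdot B\subseteq B$, and testing only against $v\in B_+$) is sound and slightly more explicit than the source.
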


A positive vector $e$ in a normed vector lattice $E$ is called {\em quasi-interior point} iff $\lVert x-x\wedge ne\rVert\to 0$ for each $x\in E_+$. If $(x_\alpha)$ is a net in a vector lattice with a weak unit $e$ then $x_\alpha \uoc 0$ iff $\lvert x_\alpha\rvert\wedge e\oc 0$; see Lemma 3.5 \cite{GTX}, and for quasi-interior point case; see Lemma 2.11 \cite{DOT}. Analogously, we show the next result.
\begin{proposition}\label{quasi-interior point}
	Let $0\leq(x_\alpha)_{\alpha\in A}\downarrow$ be a net in a Banach lattice $f$-algebra $E$ with a quasi-interior point $e$. Then $x_\alpha\mc 0$ iff $(x_\alpha e)$ norm converges to zero.
\end{proposition}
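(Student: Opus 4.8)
The forward implication is immediate: taking $u=e$ in the definition of $mn$-convergence gives $\lVert x_\alpha e\rVert\to 0$ at once, so the real content of the statement lies entirely in the converse. For the converse I would assume $\lVert x_\alpha e\rVert\to 0$, fix an arbitrary $u\in E_+$, and aim to show $\lVert x_\alpha u\rVert\to 0$ (noting $\lvert x_\alpha\rvert=x_\alpha$ since the net is positive). The plan is to split $u$ into a part controlled by the quasi-interior point $e$ and a part that is small in norm, via the decomposition $u=(u-u\wedge ne)+(u\wedge ne)$, and then estimate $\lVert x_\alpha u\rVert$ by the triangle inequality against the two summands.

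For the ``bounded'' piece I would use that $u\wedge ne\leq ne$ together with the order-monotonicity of multiplication on the positive cone of an $f$-algebra (from $c-b\geq 0$ and $a\geq 0$ one gets $a(c-b)\geq 0$, i.e. $ab\leq ac$), giving $0\leq x_\alpha(u\wedge ne)\leq n\,x_\alpha e$ and hence $\lVert x_\alpha(u\wedge ne)\rVert\leq n\lVert x_\alpha e\rVert$, which tends to $0$ for each fixed $n$. For the ``tail'' piece I would exploit that the net is decreasing: fixing any index $\alpha_0$, for $\alpha\geq\alpha_0$ one has $x_\alpha\leq x_{\alpha_0}$, so $0\leq x_\alpha(u-u\wedge ne)\leq x_{\alpha_0}(u-u\wedge ne)$, and therefore $\lVert x_\alpha(u-u\wedge ne)\rVert\leq\lVert x_{\alpha_0}\rVert\,\lVert u-u\wedge ne\rVert$ by monotonicity of the lattice norm together with the submultiplicative inequality $\lVert xy\rVert\leq\lVert x\rVert\lVert y\rVert$. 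The defining property of the quasi-interior point then supplies $\lVert u-u\wedge ne\rVert\to 0$ as $n\to\infty$.

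The two estimates must finally be combined in the correct order of quantifiers, which is the only delicate point. Given $\varepsilon>0$, I would first fix $\alpha_0$ and use quasi-interiority to choose $n$ so large that $\lVert x_{\alpha_0}\rVert\,\lVert u-u\wedge ne\rVert<\tfrac{1}{2}\varepsilon$; with this $n$ now frozen, the hypothesis $\lVert x_\alpha e\rVert\to 0$ lets me pick $\alpha_1\geq\alpha_0$ with $n\lVert x_\alpha e\rVert<\tfrac{1}{2}\varepsilon$ for all $\alpha\geq\alpha_1$. Then for $\alpha\geq\alpha_1$ both summands fall below $\tfrac{1}{2}\varepsilon$, so $\lVert x_\alpha u\rVert<\varepsilon$, yielding $x_\alpha\mc 0$.

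I expect the main obstacle to be exactly this reduction from an arbitrary $u$ to a multiple of $e$: the definition of $mn$-convergence demands uniform control over \emph{every} $u\in E_+$, whereas the hypothesis only provides information at $u=e$. The quasi-interior point property is precisely the tool that converts a general $u$ into a norm-small tail plus a piece dominated by $ne$, while the monotonicity of the decreasing net is what keeps the tail term bounded uniformly in $\alpha$ by the fixed quantity $\lVert x_{\alpha_0}\rVert\,\lVert u-u\wedge ne\rVert$. Getting these two mechanisms to interlock with the right dependence of $n$ on $\alpha_0$ and of $\alpha_1$ on $n$ is where the care is needed; the individual inequalities are routine consequences of the $f$-algebra and Banach lattice axioms already recorded in the excerpt.
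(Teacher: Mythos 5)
Your proposal is correct and follows essentially the same route as the paper's own proof: the same decomposition $u=(u-u\wedge ne)+(u\wedge ne)$, the same use of the decreasing net to bound the tail term by $\lVert x_{\alpha_0}\rVert\,\lVert u-u\wedge ne\rVert$, and the same order of quantifiers (fix $\alpha_0$, choose $n$ via quasi-interiority, then choose the final index from $\lVert x_\alpha e\rVert\to 0$). In fact your write-up handles the index bookkeeping more cleanly than the paper, which has a small typographical slip in its choice of indices.
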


\begin{proof}
	The forward implication is immediate because of $e\in E_+$. For the converse implication, fix a positive vector $u\in E_+$ and $\varepsilon>0$. Thus, for a fixed index $\alpha_1$, we have	$x_\alpha\leq x_{\alpha_1}$ for all $\alpha\geq \alpha_0$. Then
	$$
	x_\alpha u\leq x_\alpha(u-u\wedge ne)+x_\alpha(u\wedge ne)\leq x_{\alpha_1}(u-u\wedge ne)+nx_\alpha e
	$$
	for all $\alpha\geq\alpha_1$ and each $n\in \mathbb{N}$. Hence, we get
	$$
	\lVert x_\alpha u\rVert\leq \lVert x_{\alpha_1}\rVert\lVert u-u\wedge ne\rVert+n\lVert x_\alpha e\rVert
	$$
	for every $\alpha\geq\alpha_1$ and each $n\in \mathbb{N}$. So, we can find $n$ such that $\lVert u-u\wedge ne\rVert<\frac{\varepsilon}{2\lVert x_{\alpha_1}\rVert}$ because $e$ is a quasi-interior point. On the other hand, it follows from $x_\alpha e\nc 0$ that there exists an index $\alpha_2$ such that $\lVert x_\alpha e\rVert<\frac{\varepsilon}{2n}$ whenever $\alpha\geq\alpha_2$. Since index set $A$ is directed set, there exists another index $\alpha_0\in A$ such that $\alpha_0\geq \alpha_1$ and $\alpha_0\geq \alpha_2$. Therefore, we get $\lVert x_\alpha u\rVert<\lVert x_{\alpha_0}\rVert\frac{\varepsilon}{2\lVert x_{\alpha_0}\rVert}+n\frac{\varepsilon}{2n}=\varepsilon$, and so $\lVert x_\alpha u\rVert\to 0$.
\end{proof}

\begin{remark}
	A positive and decreasing net $(x_\alpha)$ in an order continuous Banach lattice $f$-algebra $E$ with weak unit $e$ is $mn$-convergent to zero iff $x_\alpha e\nc 0$. Indeed, it is known that $e$ is a weak unit iff $e$ is a quasi-interior point in an order continuous Banach lattice. Thus, by applying Proposition \ref{quasi-interior point}, we can get the desired result.
\end{remark}

\begin{proposition}\label{mn convergence positive}
	Let $(x_\alpha)$ be a net in a Banach lattice $f$-algebra $E$. Then we have that
	\begin{enumerate}
		\item[(i)] $0\leq x_\alpha\mc x$ implies $x\in E_+$,
		\item[(ii)] if $(x_\alpha)$ is monotone and $x_\alpha\mc x$ then $x_\alpha\oc x$.
	\end{enumerate}
\end{proposition}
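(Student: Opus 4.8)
For part (i) I would derive positivity of the limit from the $mn$-continuity of the lattice operations already proved. Since $x_\alpha\geq 0$ for every $\alpha$, we have $x_\alpha=x_\alpha^+$; applying the last assertion of Proposition~\ref{LO are $mn$-continuous} to the hypothesis $x_\alpha\mc x$ gives $x_\alpha^+\mc x^+$, hence $x_\alpha\mc x^+$. Since at the same time $x_\alpha\mc x$, the uniqueness of the $mn$-limit (item (iv) of the first Lemma of this section) forces $x=x^+$, that is $x\in E_+$. A direct variant avoiding Proposition~\ref{LO are $mn$-continuous} is also available: from $x_\alpha\geq 0$ one has $-x\leq x_\alpha-x\leq\lvert x_\alpha-x\rvert$, so $x^-=(-x)\vee 0\leq\lvert x_\alpha-x\rvert$; multiplying by $u\in E_+$ and taking norms yields $\lVert x^-u\rVert\leq\lVert\lvert x_\alpha-x\rvert u\rVert\to 0$, whence $\lVert x^-u\rVert=0$ for all $u\in E_+$, and taking $u=x^-$ together with the absence of nonzero nilpotents in an Archimedean $f$-algebra gives $x^-=0$. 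I would present the first route as the main one, since it uses only results from the excerpt.

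For part (ii) I would assume, without loss of generality, that $(x_\alpha)$ is increasing (replacing $(x_\alpha)$ by $(-x_\alpha)$ handles the decreasing case), and prove that $x=\sup_\alpha x_\alpha$; order convergence then follows from the definition with the decreasing net $y_\beta:=x-x_\beta\downarrow 0$. To check that $x$ is an upper bound, fix $\alpha_0$: the tail $(x_\alpha-x_{\alpha_0})_{\alpha\geq\alpha_0}$ is a positive net that, being a subnet, still $mn$-converges (item (ii) of the first Lemma) --- to $x-x_{\alpha_0}$ by linearity (item (iii)); part (i) then yields $x-x_{\alpha_0}\in E_+$, i.e. $x\geq x_{\alpha_0}$. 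To check that $x$ is the least upper bound, let $y$ be any upper bound of $(x_\alpha)$; then $(y-x_\alpha)\geq 0$ and $y-x_\alpha\mc y-x$, so part (i) gives $y-x\in E_+$, i.e. $y\geq x$. Hence $x_\alpha\uparrow x$, and the net $y_\beta=x-x_\beta$ decreases to $0$ and dominates $\lvert x_\alpha-x\rvert$ on each tail, so $x_\alpha\oc x$.

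The whole weight of the statement sits in part (i) --- the fact that the cone $E_+$ is closed under $mn$-limits. Once this is in place, part (ii) is a routine two-step argument (upper bound, then least upper bound) that simply reapplies (i); the only point needing a little care is the passage to the tail subnet in the first step, so that (i) is applied to a genuinely positive net and not merely an eventually positive one.
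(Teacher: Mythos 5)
Your proposal is correct and follows essentially the same route as the paper: part (i) via $x_\alpha=x_\alpha^+\mc x^+$ and uniqueness of $mn$-limits, and part (ii) by showing $x=\sup_\alpha x_\alpha$ using tails and part (i) for both the upper-bound and least-upper-bound steps. Your write-up is in fact cleaner than the paper's (whose statement of (i) contains a garbled ``$x^+=0$''), and the alternative direct argument for (i) is a nice bonus, but it is not a different method.
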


\begin{proof}
	$(i)$ Assume $(x_\alpha)$ consists of non-zero elements and $mn$-converges to $x\in E$. Then, by Proposition \ref{LO are $mn$-continuous}, we have $x_\alpha=x_\alpha^+\mc x^+=0$. Therefore, we get $x\in E_+$.
	
	$(ii)$ For order convergence of $(x_\alpha)$, it is enough to show that $x_\alpha\uparrow$ and $x_\alpha\mc x$ implies $x_\alpha\oc x$. For a fixed index $\alpha$, we have $x_\beta-x_\alpha\in X_+$ for $\beta\ge\alpha$. By applying $(i)$, we can see $x_\beta-x_\alpha\mc x-x_\alpha\in X_+$ as $\beta \to \infty$. Therefore, $x\geq x_\alpha$ for the index $\alpha$. Since $\alpha$ is arbitrary, $x$ is an upper bound of $(x_\alpha)$. Assume $y$ is another upper bound of $(x_\alpha)$, i.e., $y\geq x_\alpha$ for all $\alpha$. So, $y-x_\alpha\mc y-x\in X_+$, or $y\ge x$, and so $x_\alpha \uparrow x$.
\end{proof}

The $mn$-convergence passes obviously to any Banach lattice sub-$f$-algebra $Y$ of a Banach lattice $f$-algebra $E$, i.e., for any net $(y_\alpha)$ in $Y$, $y_\alpha\mc0$ in $E$ implies $y_\alpha\mc0$ in $Y$. For the converse, we give the following theorem which is similar to Theorem 2.10 \cite{AAydn}.
\begin{theorem}\label{$up$-regular}
	Let $Y$ be a Banach lattice sub-$f$-algebra of an Banach lattice $f$-algebra $E$ and $(y_\alpha)$ be a net in $Y$. If $y_\alpha\mc 0$ in $Y$ then it $mn$-converges to zero in $E$ for both of the following cases hold;
	\begin{enumerate}
		\item[(i)] $Y$ is majorizing in $E$;
		\item[(ii)] $Y$ is a projection band in $E$;
	\end{enumerate}	
\end{theorem}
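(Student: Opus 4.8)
The plan is to fix an arbitrary $w\in E_+$ and prove $\lVert\,\lvert y_\alpha\rvert w\,\rVert\to 0$, which is precisely the assertion that $y_\alpha\mc 0$ in $E$. Throughout I use two structural facts about a Banach lattice sub-$f$-algebra $Y\subseteq E$: first, the lattice operations, the multiplication, and the norm of $Y$ are the restrictions of those of $E$, so that for $u\in Y_+$ the quantity $\lVert\,\lvert y_\alpha\rvert u\,\rVert$ is the same whether read in $Y$ or in $E$, and it tends to $0$ by hypothesis; second, $\lvert y_\alpha\rvert\in Y$, since $Y$ is a sublattice. The two cases then diverge according to how an element $w\in E_+$ is approximated by, or projected into, $Y$.

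Case (i) is the straightforward one. Given $w\in E_+$, the majorizing hypothesis produces $u\in Y$ with $w\le u$; since $u\ge w\ge 0$ we have $u\in Y_+$. Because $\lvert y_\alpha\rvert\in E_+$, the monotonicity of multiplication by a positive element (namely $0\le w\le u$ implies $\lvert y_\alpha\rvert w\le \lvert y_\alpha\rvert u$) gives $0\le \lvert y_\alpha\rvert w\le \lvert y_\alpha\rvert u$, and hence $\lVert\,\lvert y_\alpha\rvert w\,\rVert\le \lVert\,\lvert y_\alpha\rvert u\,\rVert\to 0$ by the hypothesis applied to $u\in Y_+$. As $w\in E_+$ was arbitrary, $y_\alpha\mc 0$ in $E$.

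Case (ii) is where the $f$-algebra structure does the real work. Let $P_Y$ be the band projection onto $Y$ and split $w=w_1+w_2$ with $w_1:=P_Y w$ and $w_2:=(I-P_Y)w$; both projections are positive, so $w_1\in Y_+$ and $w_2\in (Y^d)_+$, where $Y^d$ is the disjoint complement. Then $\lvert y_\alpha\rvert w=\lvert y_\alpha\rvert w_1+\lvert y_\alpha\rvert w_2$. The point is that the second summand vanishes: $\lvert y_\alpha\rvert\in Y$ and $w_2\in Y^d$ are disjoint, and disjoint elements of an Archimedean $f$-algebra have product zero, so $\lvert y_\alpha\rvert w_2=0$. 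Consequently $\lvert y_\alpha\rvert w=\lvert y_\alpha\rvert w_1$ with $w_1\in Y_+$, and $\lVert\,\lvert y_\alpha\rvert w\,\rVert=\lVert\,\lvert y_\alpha\rvert w_1\,\rVert\to 0$ by the hypothesis.

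The crux of the whole argument, and the step I expect to be the main obstacle, is the identity $\lvert y_\alpha\rvert w_2=0$: this is exactly where mere Banach-lattice structure would not suffice. The underlying fact, that $x\wedge y=0$ forces $xy=0$ in an Archimedean $f$-algebra, follows from the defining axiom (applying it with $z=y$ and with $z=x$ gives $(xy)\wedge x=(xy)\wedge y=0$, hence $(xy)\wedge(x+y)=0$; one more application yields $(xy)\wedge(x+y)^2=0$, and since $0\le xy\le\tfrac12(x+y)^2$ this meet equals $xy$), and is in any case standard (see \cite{Za}). I would finally verify the two compatibility points used silently: that $w_1$ genuinely lies in $Y$ so the hypothesis may be applied to it, and that $\lvert y_\alpha\rvert$ and all products agree in $Y$ and in $E$, both of which are immediate from $Y$ being a sub-$f$-algebra and a sublattice.
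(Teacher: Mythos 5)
Your proof is correct. The paper omits its own proof of this theorem (deferring to the analogue, Theorem 2.10 of \cite{AAydn}), and your argument is the natural one it intends: in the majorizing case you dominate an arbitrary $w\in E_+$ by some $u\in Y_+$ and use monotonicity of multiplication by the positive element $\lvert y_\alpha\rvert$ together with the lattice norm, and in the projection-band case you split $w=P_Y w+(I-P_Y)w$ and annihilate the $Y^d$-component via the standard $f$-algebra fact that disjoint positive elements have zero product, a fact whose derivation from the $f$-algebra axiom you also justify correctly.
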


It is known that every Archimedean vector lattice has a unique order completion; see Theorem 2.24 \cite{ABPO}.
\begin{theorem}
	Let $E$ and $E^\delta$ be Banach lattice $f$-algebras with $E^\delta$ being order completion of $E$. Then, for a sequence $(x_n)$ in $E$, the followings hold true:
	\begin{enumerate}
		\item[(i)] If $x_n\mc 0$ in $E$ then there is a subsequence $(x_{n_k})$ of $(x_n)$ such that $x_{n_k}\fc 0$ in $E^\delta$;
		\item[(ii)]	If $x_n\mc 0$ in $E^\delta$ then there is a subsequence $(x_{n_k})$ of $(x_n)$ such that $x_{n_k}\fc 0$ in $E$.
	\end{enumerate}
\end{theorem}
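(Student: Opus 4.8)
The plan is to reduce both parts to Remark~\ref{rem 1 basic}(vi) by establishing a single auxiliary fact: for a sequence $(x_n)$ contained in $E$, the condition $x_n\mc 0$ holds in $E$ if and only if it holds in $E^\delta$. Once this equivalence is available, each implication drops out by transporting the hypothesis to the appropriate space and extracting an $mo$-convergent subsequence there.

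Before proving the equivalence I would isolate the two properties of the order completion that drive the argument. The canonical embedding $E\hookrightarrow E^\delta$ is a lattice isomorphism onto a sub-$f$-algebra that is order dense and, crucially, \emph{majorizing}: every $v\in E^\delta_+$ is dominated by some $w\in E_+$, since each element of $E^\delta$ is a supremum of a subset of $E$ that is bounded above in $E$. I also use that this embedding is isometric, so the two norms agree on $E$ and products of elements of $E$ computed in $E^\delta$ stay in $E$ with unchanged norm.

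The easy half of the equivalence, $x_n\mc 0$ in $E^\delta\Rightarrow x_n\mc 0$ in $E$, is pure restriction: for $u\in E_+\subseteq E^\delta_+$ the vector $\lvert x_n\rvert u$ already lies in $E$, its norm is the same in both spaces, and testing against fewer positive vectors is a weaker requirement. For the converse, fix $v\in E^\delta_+$ and pick $w\in E_+$ with $0\leq v\leq w$ using the majorizing property; since $\lvert x_n\rvert\geq 0$ and the $f$-algebra multiplication preserves order, $0\leq \lvert x_n\rvert v\leq \lvert x_n\rvert w$, so monotonicity of the lattice norm gives $\lVert \lvert x_n\rvert v\rVert\leq \lVert \lvert x_n\rvert w\rVert\to 0$, the limit holding because $w\in E_+$ and $x_n\mc 0$ in $E$. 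Hence $\lVert \lvert x_n\rvert v\rVert\to 0$ for every $v\in E^\delta_+$, i.e.\ $x_n\mc 0$ in $E^\delta$.

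With the equivalence in hand the theorem is immediate. For (i), transport $x_n\mc 0$ from $E$ to $E^\delta$ and apply Remark~\ref{rem 1 basic}(vi) inside the Banach lattice $f$-algebra $E^\delta$ to obtain a subsequence with $x_{n_k}\fc 0$ in $E^\delta$; for (ii), restrict $x_n\mc 0$ from $E^\delta$ to $E$ and apply the same remark inside $E$. The main obstacle is precisely the forward half of the equivalence, and everything there hinges on $E$ being majorizing (not merely order dense) in $E^\delta$: this is what allows an arbitrary positive test vector of the completion to be dominated by one coming from $E$, thereby converting $mn$-control over $E_+$ into $mn$-control over $E^\delta_+$ via order-compatibility of the product and monotonicity of the norm. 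I would be careful to invoke the majorizing property explicitly, since order denseness alone would not suffice for the required domination.
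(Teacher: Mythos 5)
Your argument is correct, and it takes a genuinely different route from the paper's. You first establish that $x_n\mc 0$ transfers both ways between $E$ and $E^\delta$ (downward by restricting the test vectors to $E_+$, upward by the majorizing property of $E$ in $E^\delta$ combined with order-compatibility of the product and monotonicity of the norm), and then invoke Remark~\ref{rem 1 basic}(vi) inside whichever space the conclusion is wanted. The paper instead works one test vector at a time: for a fixed $v\in E^\delta_+$ it dominates $v$ by some $u_v\in E_+$, uses the classical fact that a norm-null sequence has an order-null subsequence (Exercise 13 of \cite{AB}) to extract $(x_{n_k})$ with $\lvert x_{n_k}\rvert u_v\oc 0$, and moves that order convergence between $E$ and $E^\delta$ via Corollary 2.9 of \cite{GTX}. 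Your route is cleaner on one delicate point: in the paper's argument the extracted subsequence a priori depends on the test vector $v$ (resp.\ $w$), so the closing step ``since $v$ is arbitrary, $x_{n_k}\fc 0$'' tacitly requires one subsequence that works for all test vectors simultaneously; by delegating the extraction to Remark~\ref{rem 1 basic}(vi) (i.e.\ Lemma 3.11 of \cite{GX}) you obtain that uniformity built in. The price is that your proof rests entirely on that remark, which the paper states without proof; if one does not take it as given, the uniformization issue reappears and must be handled directly. Both arguments ultimately use the same two structural facts about the order completion --- that $E$ sits inside $E^\delta$ as an isometric sub-$f$-algebra and that it is majorizing --- and you are right to stress that order denseness alone would not deliver the required domination.
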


\begin{proof}
	Let $x_n\mc 0$ in $E$, i.e., $\lvert x_n\rvert u\nc 0$ in $E$ for all $u\in E_+$. Now, let's fix $v\in E^\delta_+$. Then there exists $u_v\in E_+$ such that $v\leq u_v$ because $E$ majorizes $E^\delta$. Since $\lvert x_n\rvert u_v\nc 0$, by the standard fact in Exercise 13 \cite[p.25]{AB}, there exists a subsequence $(x_{n_{k}})$ of $(x_n)$ such that $(\lvert x_{n_{k}}\rvert u_v)$ order converges to zero in $E$. Thus, $\lvert x_{n_k}\rvert u_v\oc 0$ in $E^\delta$; see Corollary 2.9 \cite{GTX}. Then it follows from the inequality $\lvert x_{n_k}\rvert v\leq \lvert x_{n_k}\rvert u_v$ that we have $\lvert x_{n_k}\rvert v\oc 0$ in $E^\delta$. That is, $x_{n_k}\fc 0$ in the order completion $E^\delta$ because $v\in E^\delta_+$ is arbitrary.\\
	
	For the converse, put $x_n\mc 0$ in $E^\delta$. Then, for all $u\in E^\delta_+$, we have $\lvert x_n\rvert u\nc 0$ in $E^\delta$. In particular, for all $w\in E_+$, $\lVert\lvert x_n\rvert w\rVert\to 0$ in $E^\delta$. Fix $w\in E_+$. Then, again by the standard fact in Exercise 13 \cite[p.25]{AB}, we have a subsequence $(x_{n_{k}})$ of $(x_n)$ such that $(x_{n_{k}})$ is order convergent to zero in $E^\delta$. Thus, by Corollary 2.9 \cite{GTX}, we see  $\lvert x_{n_k}\rvert w\oc 0$ in $E$. As a result, since $w$ is arbitrary, $x_{n_k}\fc 0$ in $E$.
\end{proof}

Recall that a subset $A$ in a normed lattice $(E,\lvert \cdot\rVert)$ is said to be almost order bounded if, for any $\epsilon>0$, there is $u_\epsilon\in E_+$ such that $\lvert (|x|-u_\epsilon)^+\rVert=\lvert |x|-u_\epsilon\wedge|x|\rVert\leq\epsilon$ for any $x\in A$. For a given Banach lattice $f$-algebra $E$, one can give the following definition: a subset $A$ of $E$ is called a {\em $f$-almost order bounded} if, for any $\epsilon>0$, there is $u_\varepsilon\in E_+$ such that $\lVert|x|-u_\epsilon|x|\rVert\leq\epsilon$ for any $x\in A$. Similar to Proposition 3.7 \cite{GX}, we give the following work.
\begin{proposition}\label{$f$-almost}
	Let $E$ be a Banach lattice $f$-algebra. If $(x_\alpha)$ is $f$-almost order bounded and $mn$-converges to $x$, then $(x_\alpha)$ converges to $x$ in norm.
\end{proposition}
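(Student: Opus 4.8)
The plan is to prove that $(x_\alpha)$ is norm Cauchy; since $E$ is a Banach lattice it then converges in norm, and identifying the norm limit with $x$ will follow from the uniqueness of $mn$-limits. I prefer this route to a direct estimate of $\lVert x_\alpha-x\rVert$, because a direct estimate forces one to control the ``multiplicative tail'' $\lVert |x|-u_\varepsilon|x|\rVert$ of the \emph{limit}, and $mn$-convergence does not transmit the uniform tail bound of the net to $x$ (products are not norm continuous under $mn$-convergence, cf.\ Remark~\ref{rem 1 basic}(i)). Passing instead to differences $x_\alpha-x_\beta$ avoids the limit entirely, since both indices range over members of the net.

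Fix $\varepsilon>0$ and, using $f$-almost order boundedness, choose $u_\varepsilon\in E_+$ with $\lVert |z|-u_\varepsilon|z|\rVert\le\varepsilon$ for every member $z$ of the net; I note that the definition is meaningful precisely when $u_\varepsilon|z|\le|z|$, which is ensured by $0\le u_\varepsilon\le e$ in the unital setting. The starting point is the splitting
\begin{equation*}
|x_\alpha-x_\beta| = |x_\alpha-x_\beta|\,u_\varepsilon + \bigl(|x_\alpha-x_\beta| - |x_\alpha-x_\beta|\,u_\varepsilon\bigr),
\end{equation*}
so that $\lVert x_\alpha-x_\beta\rVert$ is at most the sum of the norms of the two summands. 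For the first summand I would use $|x_\alpha-x_\beta|\le |x_\alpha-x|+|x-x_\beta|$, multiply by $u_\varepsilon\in E_+$, and invoke $x_\alpha\mc x$ to force $\lVert |x_\alpha-x_\beta|u_\varepsilon\rVert\le \lVert|x_\alpha-x|u_\varepsilon\rVert+\lVert|x-x_\beta|u_\varepsilon\rVert$ below $\varepsilon$ for all $\alpha,\beta$ beyond some index.

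The key algebraic step, which carries the second summand, is the inequality
\begin{equation*}
0\le |x_\alpha-x_\beta| - |x_\alpha-x_\beta|\,u_\varepsilon \le \bigl(|x_\alpha|-|x_\alpha|u_\varepsilon\bigr)+\bigl(|x_\beta|-|x_\beta|u_\varepsilon\bigr).
\end{equation*}
After cancellation this reduces to $s\,u_\varepsilon\le s$ for the positive element $s=|x_\alpha|+|x_\beta|-|x_\alpha-x_\beta|$, which holds because $0\le u_\varepsilon\le e$. Since the lattice norm is monotone on the positive cone and both $x_\alpha$ and $x_\beta$ are members of the net, the right-hand side has norm at most $2\varepsilon$, and hence so does the second summand. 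Combining the two estimates gives $\lVert x_\alpha-x_\beta\rVert\le 3\varepsilon$ for all sufficiently large $\alpha,\beta$, i.e.\ $(x_\alpha)$ is norm Cauchy.

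Finally, completeness of $E$ produces a norm limit $y$; norm convergence implies $mn$-convergence by Remark~\ref{rem 1 basic}(ii), so $x_\alpha\mc y$, while $x_\alpha\mc x$ by hypothesis, and the uniqueness of the $mn$-limit forces $y=x$. Thus $x_\alpha\to x$ in norm. I expect the only delicate point to be the positivity underpinning the key inequality: it relies on $|z|-u_\varepsilon|z|\ge 0$ and $s\,u_\varepsilon\le s$, both guaranteed by $0\le u_\varepsilon\le e$, and this is exactly the step that genuinely exploits the multiplicative structure of the $f$-algebra rather than the lattice structure alone.
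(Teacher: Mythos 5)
Your route is genuinely different from the paper's: the paper estimates $\lVert x_\alpha-x\rVert$ directly, first asserting that the net $(\lvert x_\alpha-x\rvert)$ is again $f$-almost order bounded and then splitting $\lVert\lvert x_\alpha-x\rvert\rVert\le\lVert\lvert x_\alpha-x\rvert-u_\varepsilon\lvert x_\alpha-x\rvert\rVert+\lVert\lvert x_\alpha-x\rvert u_\varepsilon\rVert$, whereas you prove the net is norm Cauchy and recover the limit from completeness together with uniqueness of $mn$-limits. Your reason for avoiding the direct route --- that it silently transfers the uniform tail bound to the limit $x$, which $mn$-convergence does not obviously permit --- is a fair criticism of the paper's own unproved first step, and passing to differences of net members is the right way to dodge it.

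Nevertheless, your argument has a genuine gap at its central inequality. Both the positivity $0\le\lvert x_\alpha-x_\beta\rvert-\lvert x_\alpha-x_\beta\rvert u_\varepsilon$ and the upper bound by $(\lvert x_\alpha\rvert-\lvert x_\alpha\rvert u_\varepsilon)+(\lvert x_\beta\rvert-\lvert x_\beta\rvert u_\varepsilon)$ reduce, as you note, to $su_\varepsilon\le s$ for a positive $s$, and this needs $0\le u_\varepsilon\le e$ for a multiplicative unit $e$. The proposition, however, is stated for arbitrary Banach lattice $f$-algebras, which in this paper are \emph{not} assumed unital, and the definition of $f$-almost order boundedness supplies only some $u_\varepsilon\in E_+$ with $\lVert\lvert x\rvert-u_\varepsilon\lvert x\rvert\rVert\le\varepsilon$; it neither forces $u_\varepsilon\le e$ nor requires $\lvert x\rvert-u_\varepsilon\lvert x\rvert\ge0$ for the definition to be meaningful (the quantity is simply the norm of an element). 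Without $u_\varepsilon\le e$ the inequality $su_\varepsilon\le s$ is false in general (already in $E=\mathbb{R}$ with $u_\varepsilon=2$, $s=1$). In the unital case your step can in fact be repaired without the extra hypothesis $u_\varepsilon\le e$: factor $\lvert z\rvert-u_\varepsilon\lvert z\rvert=(e-u_\varepsilon)\lvert z\rvert$ and use $\lvert ab\rvert=\lvert a\rvert\lvert b\rvert$ together with $\lvert z\rvert\le\lvert x_\alpha\rvert+\lvert x_\beta\rvert$ to get $\lvert(e-u_\varepsilon)\lvert z\rvert\rvert\le\lvert e-u_\varepsilon\rvert\lvert x_\alpha\rvert+\lvert e-u_\varepsilon\rvert\lvert x_\beta\rvert$, whence a norm bound of $2\varepsilon$; note also that your first summand needs only the triangle inequality of the norm, not order-positivity. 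But the unit remains essential to any version of this estimate, so as written your proof establishes the proposition only for unital $E$ (and, as you set it up, only under the additional assumption $u_\varepsilon\le e$), not in the stated generality.
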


\begin{proof}
	Assume $(x_\alpha)$ is $f$-almost order bounded net. Then the net $(|x_\alpha-x|)$ is also $f$-almost order bounded. For any fixed $\varepsilon>0$, there exists $u_\varepsilon>0$ such that
	$$
	\lvert \lvert x_\alpha-x\rvert-u_\epsilon\lvert x_\alpha-x\rvert\rVert\leq\epsilon.
	$$
	Since $x_\alpha\mc x$, we have $\lvert \lvert x_\alpha-x\rvert u_\varepsilon\rVert\to 0$. Therefore, we get $\lvert x_\alpha-x\rVert\leq\varepsilon$, i.e., $x_\alpha\to x$ in norm.
\end{proof}

\begin{proposition}
	In an order continuous Banach lattice $f$-algebra, every $f$-almost order bounded $mo$-Cauchy net converges $mn$- and in norm to the same limit.
\end{proposition}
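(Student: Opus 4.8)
The plan is to prove that the net is norm Cauchy; since a Banach lattice $f$-algebra is norm complete, this yields a norm limit $x$, and then Remark~\ref{rem 1 basic}(ii) upgrades norm convergence to $mn$-convergence, so that $x_\alpha\mc x$ and $x_\alpha\to x$ in norm to one and the same $x$. The first step is to record what order continuity extracts from the $mo$-Cauchy hypothesis: for every $u\in E_+$ we have $\lvert x_\alpha-x_{\alpha'}\rvert u\oc 0$ along $A\times A$, so order continuity gives $\lVert\lvert x_\alpha-x_{\alpha'}\rvert u\rVert\to 0$; that is, the net is $mn$-Cauchy.

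To turn this into control of $\lVert x_\alpha-x_{\alpha'}\rVert$ itself I would fix $\varepsilon>0$, invoke $f$-almost order boundedness to pick $u_\varepsilon\in E_+$ with $\lVert\lvert x_\gamma\rvert-u_\varepsilon\lvert x_\gamma\rvert\rVert\leq\varepsilon$ for every index $\gamma$, and split
\begin{equation*}
x_\alpha-x_{\alpha'}=(x_\alpha-u_\varepsilon x_\alpha)-(x_{\alpha'}-u_\varepsilon x_{\alpha'})+u_\varepsilon(x_\alpha-x_{\alpha'}).
\end{equation*}
The last summand obeys $\lVert u_\varepsilon(x_\alpha-x_{\alpha'})\rVert=\lVert\lvert x_\alpha-x_{\alpha'}\rvert u_\varepsilon\rVert\to 0$ by the $mn$-Cauchy statement just established, so it is at most $\varepsilon$ once $\alpha,\alpha'$ exceed a common threshold $\alpha_0$ (available since $A$ is directed).

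The main obstacle is the first two summands, where the $f$-algebra structure must be used. Because multiplication by the positive element $u_\varepsilon$ is band preserving, so is $I-M_{u_\varepsilon}$, whence $x_\gamma^+-u_\varepsilon x_\gamma^+$ and $x_\gamma^--u_\varepsilon x_\gamma^-$ are disjoint and $\lvert x_\gamma-u_\varepsilon x_\gamma\rvert=\lvert\,\lvert x_\gamma\rvert-u_\varepsilon\lvert x_\gamma\rvert\,\rvert$; I expect checking this disjointness identity to be the only non-routine point, the remaining estimates being the triangle inequality together with order continuity. It gives $\lVert x_\gamma-u_\varepsilon x_\gamma\rVert=\lVert\lvert x_\gamma\rvert-u_\varepsilon\lvert x_\gamma\rvert\rVert\leq\varepsilon$ for every $\gamma$, and combining the three estimates yields $\lVert x_\alpha-x_{\alpha'}\rVert\leq 3\varepsilon$ for $\alpha,\alpha'\geq\alpha_0$. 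Hence the net is norm Cauchy, norm convergent to some $x$, and $mn$-convergent to the same $x$; alternatively, once $x_\alpha\mc x$ is known, Proposition~\ref{$f$-almost} re-derives the norm convergence, confirming consistency.
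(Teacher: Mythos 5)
Your argument is correct, and its skeleton matches the paper's: use order continuity to turn the $mo$-Cauchy hypothesis into an $mn$-Cauchy (in the paper: $mn$-null for the difference net) statement, upgrade to norm Cauchyness via $f$-almost order boundedness, invoke norm completeness, and finish with Remark~\ref{rem 1 basic}(ii). The one genuine divergence is in the middle step. The paper asserts that the double net $(x_\alpha-x_{\alpha'})$ is itself $f$-almost order bounded and then applies Proposition~\ref{$f$-almost} to it; that assertion is true but not immediate from the definition, and justifying it requires essentially the orthomorphism identity you verify (one needs $\lvert\,\lvert z\rvert-w\lvert z\rvert\,\rvert=\lvert(I-M_w)z\rvert\leq\lvert(I-M_w)\lvert x\rvert\rvert+\lvert(I-M_w)\lvert y\rvert\rvert$ for $z=x-y$). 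You instead bypass Proposition~\ref{$f$-almost} entirely via the three-term splitting $x_\alpha-x_{\alpha'}=(x_\alpha-u_\varepsilon x_\alpha)-(x_{\alpha'}-u_\varepsilon x_{\alpha'})+u_\varepsilon(x_\alpha-x_{\alpha'})$, using only the $f$-almost order boundedness of the original net; the price is checking that $x_\gamma^{+}-u_\varepsilon x_\gamma^{+}$ and $x_\gamma^{-}-u_\varepsilon x_\gamma^{-}$ are disjoint, which does hold since multiplication by a positive element in an $f$-algebra is band preserving, so $I-M_{u_\varepsilon}$ maps the bands $B_{x_\gamma^{+}}$ and $B_{x_\gamma^{-}}$ into themselves and hence $\lvert x_\gamma-u_\varepsilon x_\gamma\rvert=\lvert\,\lvert x_\gamma\rvert-u_\varepsilon\lvert x_\gamma\rvert\,\rvert$. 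Net effect: your route is self-contained and makes explicit a point the paper glosses over, while the paper's is shorter on the page because it delegates the work to Proposition~\ref{$f$-almost}.
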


\begin{proof}
	Assume a net $(x_\alpha)$ is $f$-almost order bounded and $mo$-Cauchy. Then the net $(x_\alpha-x_{\alpha'})$ is $f$-almost order bounded and is $mo$-convergent to zero. Thus, it $mn$-converges to zero by order continuity. Hence, by applying Proposition \ref{$f$-almost}, we get the net $(x_\alpha-x_{\alpha'})$ converges to zero in the norm. It follows that the net $(x_\alpha)$ is norm Cauchy, and so it is norm convergent. As a result, we have that $(x_\alpha)$ $mn$-converges to its norm limit by Remark \ref{rem 1 basic}$(ii)$.
\end{proof}

The multiplication in $f$-algebra is $mo$-continuous in the following sense.
\begin{theorem}
	Let $E$ be a Banach lattice $f$-algebra, and $(x_\alpha)_{\alpha \in A}$ and $(y_\beta)_{\beta \in B}$ be two nets in $E$. If $x_\alpha\mc x$ and $y_\beta\mc y$ for some $x,y\in E$ and each positive element of $E$ can be written as a multiplication of two positive elements then we have $x_\alpha y_\beta\mc xy$.
\end{theorem}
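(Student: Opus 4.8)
The plan is to unwind the definition of $mn$-convergence: fixing an arbitrary $u\in E_+$, it suffices to prove that $\lVert\lvert x_\alpha y_\beta-xy\rvert u\rVert\to 0$ along the product directed set $A\times B$. First I would insert the mixed term $x_\alpha y$ and write $x_\alpha y_\beta-xy=x_\alpha(y_\beta-y)+(x_\alpha-x)y$. Invoking the Archimedean $f$-algebra identity $\lvert ab\rvert=\lvert a\rvert\lvert b\rvert$ together with the triangle inequality yields
\begin{equation*}
\lvert x_\alpha y_\beta-xy\rvert u\leq \lvert x_\alpha\rvert\lvert y_\beta-y\rvert u+\lvert x_\alpha-x\rvert\lvert y\rvert u ,
\end{equation*}
so that by monotonicity and subadditivity of the norm it is enough to drive each of the two summands $\lVert\lvert x_\alpha\rvert\lvert y_\beta-y\rvert u\rVert$ and $\lVert\lvert x_\alpha-x\rvert\lvert y\rvert u\rVert$ to zero.

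The second summand is the easy one: since $\lvert y\rvert u\in E_+$ and $x_\alpha\mc x$, the quantity $\lVert\lvert x_\alpha-x\rvert(\lvert y\rvert u)\rVert$ tends to $0$ as $\alpha\to\infty$, and this estimate is independent of $\beta$.

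For the first summand I would exploit the standing hypothesis that every positive element factors as a product of two positive elements: write $u=vw$ with $v,w\in E_+$. Using commutativity and associativity of the multiplication, regroup
\begin{equation*}
\lvert x_\alpha\rvert\lvert y_\beta-y\rvert u=(\lvert x_\alpha\rvert v)(\lvert y_\beta-y\rvert w),
\end{equation*}
and then apply the Banach lattice $f$-algebra submultiplicativity to get $\lVert\lvert x_\alpha\rvert\lvert y_\beta-y\rvert u\rVert\leq \lVert\lvert x_\alpha\rvert v\rVert\,\lVert\lvert y_\beta-y\rvert w\rVert$. Here the factor $\lVert\lvert y_\beta-y\rvert w\rVert\to 0$ as $\beta\to\infty$ because $y_\beta\mc y$ and $w\in E_+$.

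The crux --- and the reason the naive bound $\lVert x_\alpha(\,\cdot\,)\rVert\leq\lVert x_\alpha\rVert\lVert\,\cdot\,\rVert$ is useless --- is that $mn$-convergence gives no control over $\lVert x_\alpha\rVert$. The factorization of $u$ is exactly what repairs this: I would argue that $\lVert\lvert x_\alpha\rvert v\rVert$ is eventually bounded. Indeed, from the reverse triangle inequality $\bigl\lvert\lvert x_\alpha\rvert-\lvert x\rvert\bigr\rvert\leq\lvert x_\alpha-x\rvert$ and $\lvert ab\rvert=\lvert a\rvert\lvert b\rvert$, one obtains $\lVert(\lvert x_\alpha\rvert-\lvert x\rvert)v\rVert\leq\lVert\lvert x_\alpha-x\rvert v\rVert\to 0$, whence $\lvert x_\alpha\rvert v\nc\lvert x\rvert v$ and in particular $\lVert\lvert x_\alpha\rvert v\rVert\to\lVert\lvert x\rvert v\rVert$; this furnishes a constant $M$ and an index $\alpha_0$ with $\lVert\lvert x_\alpha\rvert v\rVert\leq M$ for all $\alpha\geq\alpha_0$. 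Combining the three estimates, and choosing indices over $A\times B$ so that each of the two summands falls below $\varepsilon/2$, completes the argument. I expect this boundedness step to be the only genuine difficulty; everything else is the standard split-and-estimate.
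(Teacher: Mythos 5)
Your proof is correct and follows essentially the same route as the paper: the same split $x_\alpha y_\beta-xy=x_\alpha(y_\beta-y)+(x_\alpha-x)y$, with the factorization $u=vw$ and submultiplicativity of the Banach lattice $f$-algebra norm used to control the cross term. The only difference is that the paper further splits $\lvert x_\alpha\rvert\leq\lvert x_\alpha-x\rvert+\lvert x\rvert$ so that the cross term becomes a product of two vanishing factors, whereas you keep $\lvert x_\alpha\rvert$ and supply a (correct) extra step showing $\lVert\lvert x_\alpha\rvert v\rVert$ is eventually bounded.
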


\begin{proof}
	Assume $x_\alpha\mc x$ and $y_\beta\mc y$. Then $\lvert x_\alpha-x\rvert u\nc 0$ and $\lvert y_\beta-y\rvert u\nc 0$ for every $u\in E_+$. Let's fix $u\in E_+$ and $\varepsilon>0$. So, there exist indexes $\alpha_0$ and $\beta_0$ such that $\lVert\lvert x_\alpha-x\rvert u\rVert\leq \varepsilon$ and $\lVert\lvert y_\beta-y\rvert u\rVert\leq\varepsilon$ for all $\alpha\geq\alpha_0$ and $\beta\geq\beta_0$. Next, we show the $mn$-convergence of $(x_\alpha y_\beta)$ to $xy$. By considering the equality $\lvert xy\rvert =\lvert x\rvert \lvert y\rvert $, we have
	\begin{eqnarray*}
		\lVert\lvert x_\alpha y_\beta-xy\rvert u\rVert&=&\lVert\lvert x_\alpha y_\beta-x_\alpha y+x_\alpha y-xy\rvert u\rVert\\&\leq&  \lVert \lvert x_\alpha-x+x\rvert\lvert y_\beta-y\rvert u\rVert+\lVert\lvert x_\alpha -x\rvert \ (\lvert y\rvert u)\rVert\\&\leq& \lVert\lvert x_\alpha-x\rvert \lvert y_\beta-y\rvert u\rVert+\lVert\lvert y_\beta-y\rvert (\lvert x\rvert u)\rVert+\lVert\lvert x_\alpha -x\rvert (\lvert y\rvert u)\rVert.
	\end{eqnarray*}
	The second and the third terms in the last inequality both order converge to zero as $\beta\to\infty$ and $\alpha\to \infty$ respectively because of $\lvert x\rvert u,\lvert y\rvert u\in E_+$, $x_\alpha\mc x$ and $y_\beta\mc y$. Now, let's show the $mn$-convergence of the first term of last inequality. There are two positive elements $u_1,u_2\in E_+$ such that $u=u_1u_2$ because the positive element of $E$ can be written as a multiplication of two positive elements. So, we get $\lVert\lvert x_\alpha-x\rvert \lvert y_\beta-y\rvert u\rVert=\lVert(\lvert x_\alpha-x\rvert u_1)(\lvert y_\beta-y\rvert u_2)\rVert\leq(\lVert\lvert x_\alpha-x\rvert u_1\rVert)(\lVert\lvert y_\beta-y\rvert u_2\rVert)$. Therefore, we see $\lvert x_\alpha-x\rvert \lvert y_\beta-y\rvert u\nc 0$. Hence, we get $x_\alpha y_\beta\mc xy$.
\end{proof}

We give some basic notions motivated by their analogies from vector lattice theory.
\begin{definition}\label{$mn$-notions}
	Let $(x_\alpha)_{\alpha \in A}$ be a net in a Banach lattice $f$-algebra $E$. Then 
	\begin{enumerate}
		\item[(1)] $(x_\alpha)$ is said to be {\em $mn$-Cauchy} if the net $(x_\alpha-x_{\alpha'})_{(\alpha,\alpha') \in A\times A}$ $mn$-converges to $0$,
		\item[(2)] $E$ is called {\em $mn$-complete} if every $mn$-Cauchy net in $E$ is $mn$-convergent,
		\item[(3)] $E$ is called {\em $mn$-continuous} if $x_\alpha\oc0$ implies that $x_\alpha\mc 0$,
	\end{enumerate}	
\end{definition}

\begin{lemma}\label{order and mn-convergence}
	A Banach lattice $f$-algebra is $mn$-continuous iff $x_\alpha\downarrow 0$ implies $x_\alpha\mc 0$.
\end{lemma}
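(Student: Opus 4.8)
The plan is to observe first that the forward implication is essentially free, and then to spend the real effort on the converse. If $E$ is $mn$-continuous, meaning $x_\alpha\oc 0$ implies $x_\alpha\mc 0$, then in particular any net with $x_\alpha\downarrow 0$ order-converges to $0$ and hence $mn$-converges to $0$; a decreasing net converging to its infimum is the prototypical order-null net, so nothing more is needed here.

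For the converse I would assume that $x_\alpha\downarrow 0$ always forces $x_\alpha\mc 0$, and take an arbitrary net with $x_\alpha\oc 0$; the goal is to deduce $x_\alpha\mc 0$. First I would unwind the definition of order convergence: there is a second net $(y_\beta)_{\beta\in B}$ with $y_\beta\downarrow 0$ such that, for every $\beta$, one has $\lvert x_\alpha\rvert\le y_\beta$ for all $\alpha$ beyond some $\alpha_\beta$. The crucial point is that this dominating net is itself decreasing to zero, so the hypothesis applies directly to it and yields $y_\beta\mc 0$, i.e.\ $\lVert y_\beta u\rVert\to 0$ for every $u\in E_+$.

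From here the argument is a standard domination-and-monotonicity estimate. Fix $u\in E_+$ and $\varepsilon>0$. Using $y_\beta\mc 0$ choose an index $\beta_0$ with $\lVert y_{\beta_0}u\rVert<\varepsilon$, and then choose $\alpha_{\beta_0}$ so that $\lvert x_\alpha\rvert\le y_{\beta_0}$ for all $\alpha\ge\alpha_{\beta_0}$. Multiplying this order inequality by the positive element $u$ and invoking the routine $f$-algebra fact recalled in the introduction (namely that $a\le b$ implies $au\le bu$ for $u\in E_+$), I get $\lvert x_\alpha\rvert u\le y_{\beta_0}u$ for all $\alpha\ge\alpha_{\beta_0}$. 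Since the lattice norm is monotone, this gives $\lVert\lvert x_\alpha\rvert u\rVert\le\lVert y_{\beta_0}u\rVert<\varepsilon$ for all $\alpha\ge\alpha_{\beta_0}$, so $\lVert\lvert x_\alpha\rvert u\rVert\to 0$. As $u\in E_+$ was arbitrary, $x_\alpha\mc 0$, completing the converse.

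I do not expect a genuine obstacle here; the only place requiring care is bookkeeping with the two directed index sets $A$ and $B$, in particular making sure that the dominating net $(y_\beta)$ legitimately qualifies as a ``$x_\alpha\downarrow 0$'' net to which the hypothesis applies, and that the transition from the order inequality $\lvert x_\alpha\rvert\le y_{\beta_0}$ to the norm estimate uses only monotonicity of multiplication by $u$ and monotonicity of the norm. Both are available from the standing assumptions on Banach lattice $f$-algebras.
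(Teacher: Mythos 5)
Your proof is correct and follows essentially the same route as the paper's: dominate $\lvert x_\alpha\rvert$ by the decreasing net $(y_\beta)$ from the definition of order convergence, apply the hypothesis to $(y_\beta)$, and transfer the estimate via monotonicity of multiplication by $u\in E_+$ and of the lattice norm. If anything, your write-up is more careful than the paper's, which drops the factor $u$ from its norm inequalities and leaves the (trivial) forward implication implicit.
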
 

\begin{proof}
	We show $mn$-continuity. Let $(x_\alpha)$ be an order convergent to zero net in a Banach lattice $f$-algebra $E$. Then there exists a net $z_\beta\downarrow 0$ in $E$ such that, for any $\beta$ there exists $\alpha_\beta$ so that $\lvert x_\alpha\rvert\leq z_\beta$, and so $\lVert x_\alpha\rVert\leq \lVert z_\beta\rVert$ for all $\alpha\geq\alpha_\beta$. Since $z_\beta\downarrow 0$, we have $z_\beta\mc 0$, i.e., for fixed $\varepsilon>0$, there is $\beta_0$ such that $\lVert z_\beta\rVert<\varepsilon$ for all $\beta\geq\beta_0$. Thus, there exists an index $\alpha_{\beta_0}$ so that $\lVert x_\alpha\rVert\leq \varepsilon$ for all $\alpha\geq\alpha_{\beta_0}$. Hence, $x_\alpha\mc 0$.
\end{proof}

\begin{theorem}\label{of-contchar}
	Let $E$ be an $mn$-complete Banach lattice $f$-algebra. Then the following statements are equivalent:
	\begin{enumerate}
		\item[(i)] $E$ is $mn$-continuous;
		\item[(ii)] if $0\leq x_\alpha\uparrow\leq x$ holds in $E$ then $(x_\alpha)$ is an $mn$-Cauchy net;
		\item[(iii)] $x_\alpha\downarrow 0$ implies $x_\alpha\mc 0$ in $E$.
	\end{enumerate}	
\end{theorem}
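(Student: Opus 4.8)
The heart of the matter is that (i) and (iii) are already equivalent for free: condition (iii) is verbatim the criterion appearing in Lemma~\ref{order and mn-convergence}, so that Lemma supplies (iii)$\Rightarrow$(i) (indeed (i)$\Leftrightarrow$(iii)) with no further work. The plan is therefore to complete the cycle (i)$\Rightarrow$(ii)$\Rightarrow$(iii)$\Rightarrow$(i): I would prove (i)$\Rightarrow$(ii) and (ii)$\Rightarrow$(iii) by hand and let the Lemma close the loop. The standing $mn$-completeness hypothesis is held in reserve, since it is invoked exactly once, in (ii)$\Rightarrow$(iii).

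For (ii)$\Rightarrow$(iii) I would use a shift trick that converts a decreasing net into an increasing order-bounded one. Suppose $x_\alpha\downarrow 0$ and fix an index $\alpha_0$. For $\alpha\geq\alpha_0$ set $y_\alpha:=x_{\alpha_0}-x_\alpha$; then $0\leq y_\alpha\uparrow\leq x_{\alpha_0}$, so by (ii) the net $(y_\alpha)$ is $mn$-Cauchy. This is the single place where $mn$-completeness enters: it upgrades $mn$-Cauchy to $mn$-convergent, say $y_\alpha\mc y$. Since $(y_\alpha)$ is increasing, Proposition~\ref{mn convergence positive}(ii) forces $y_\alpha\uparrow y$, whence $y=\sup_\alpha y_\alpha=x_{\alpha_0}-\inf_\alpha x_\alpha=x_{\alpha_0}$, using $\inf_\alpha x_\alpha=0$. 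Thus $x_{\alpha_0}-x_\alpha\mc x_{\alpha_0}$, and subtracting the constant $x_{\alpha_0}$ (linearity of $mn$-convergence) yields $x_\alpha\mc 0$, which is (iii).

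For (i)$\Rightarrow$(ii) the engine is the order-interval estimate. Given $0\leq x_\alpha\uparrow\leq x$, whenever $\alpha,\beta\geq\gamma$ both $x_\alpha$ and $x_\beta$ lie in the order interval $[x_\gamma,x]$, so that $\lvert x_\alpha-x_\beta\rvert\leq x-x_\gamma$; multiplying by any $u\in E_+$ and using monotonicity of the lattice norm gives $\lVert\lvert x_\alpha-x_\beta\rvert u\rVert\leq\lVert(x-x_\gamma)u\rVert$. Consequently $(x_\alpha)$ is $mn$-Cauchy the moment the decreasing net $(x-x_\gamma)_\gamma$ is itself $mn$-null. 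This last point is exactly where the work concentrates, and I expect it to be the main obstacle: $(x-x_\gamma)$ is decreasing and nonnegative, but to feed it into the $mn$-continuity hypothesis (i) one must know it is \emph{order} null, i.e. that $x-x_\gamma\downarrow 0$, equivalently $x=\sup_\gamma x_\gamma$. I would therefore either read the hypothesis $0\leq x_\alpha\uparrow\leq x$ as asserting $x_\alpha\uparrow x$, in which case $x-x_\gamma\downarrow 0$ and $mn$-continuity closes the argument at once, or, in the genuinely order-bounded case, supply the missing supremum from the completeness hypotheses before applying (i). Once the tail infimum is shown to vanish, the estimate above makes the $mn$-Cauchy conclusion immediate, and everything else is routine bookkeeping.
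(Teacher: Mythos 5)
Your treatment of (ii)$\Rightarrow$(iii) and (iii)$\Rightarrow$(i) is correct and is essentially the paper's argument: the shift $y_\alpha:=x_{\alpha_0}-x_\alpha$, the single use of $mn$-completeness, Proposition~\ref{mn convergence positive}(ii) to identify the limit, and Lemma~\ref{order and mn-convergence} to close the cycle all appear in the same roles there.

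The genuine gap is in (i)$\Rightarrow$(ii), and it is exactly the obstacle you flag but do not overcome. From $0\leq x_\alpha\uparrow\leq x$ you get $\lvert x_\alpha-x_{\alpha'}\rvert\leq x-x_\gamma$ for $\alpha,\alpha'\geq\gamma$, but the decreasing net $(x-x_\gamma)_\gamma$ is order null only if $x=\sup_\gamma x_\gamma$, and the hypothesis of (ii) asserts only that $x$ is \emph{an} upper bound. Neither of your proposed escapes is available: reading $x_\alpha\uparrow\leq x$ as $x_\alpha\uparrow x$ silently weakens statement (ii) to order-convergent nets, so the equivalence claimed in the theorem would not be what you proved; and ``supplying the missing supremum from the completeness hypotheses'' is circular, since order completeness of an $mn$-continuous, $mn$-complete $E$ is deduced in Corollary~\ref{of + f implies o} precisely from the implication (i)$\Rightarrow$(ii) you are trying to establish. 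The paper avoids the supremum altogether by invoking Lemma 12.8 of \cite{ABPO}: for any net with $0\leq x_\alpha\uparrow\leq x$ there is a second net $(y_\beta)$ (in effect, the downward-directed family of all upper bounds) with $(y_\beta-x_\alpha)_{(\beta,\alpha)}\downarrow 0$; then $mn$-continuity, in the form of Lemma~\ref{order and mn-convergence}, gives $(y_\beta-x_\alpha)\mc 0$, and the triangle inequality $\lvert x_\alpha-x_{\alpha'}\rvert\leq\lvert x_\alpha-y_\beta\rvert+\lvert y_\beta-x_{\alpha'}\rvert$ yields the $mn$-Cauchy property. Your order-interval estimate is the right engine; what is missing is this two-parameter replacement for the nonexistent supremum.
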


\begin{proof} 
	$(i)\Rightarrow(ii)$ Take the net $0\leq x_\alpha\uparrow\leq x$ in $E$. Then there exists a net $(y_\beta)$ in $E$ such that $(y_\beta-x_\alpha)_{\alpha,\beta}\downarrow 0$; see Lemma 12.8 \cite{ABPO}. Thus, by applying Lemma \ref{order and mn-convergence}, we have $(y_\beta-x_\alpha)_{\alpha,\beta}\mc 0$. Therefore, the net $(x_\alpha)$ is $mn$-Cauchy because of $\lVert x_\alpha-x_{\alpha'} \rVert_{\alpha,\alpha'\in A}\leq\lVert x_\alpha-y_\beta\rVert+\lVert y_\beta-x_{\alpha'}\rVert$.
	
	$(ii)\Rightarrow(iii)$ Put $x_\alpha\downarrow 0$ in $E$ and fix arbitrary $\alpha_0$. Thus, we have $x_\alpha\leq x_{\alpha_0}$ for all $\alpha\geq\alpha_0$, and so we can get $0\leq(x_{\alpha_0}-x_\alpha)_{\alpha\geq\alpha_0}\uparrow\leq x_{\alpha_0}$. Then it follows from $(ii)$ that the net $(x_{\alpha_0}-x_\alpha)_{\alpha\geq\alpha_0}$ is $mn$-Cauchy, i.e., $(x_{\alpha^{'}}-x_\alpha)\mc 0$ as $\alpha_0\le\alpha,\alpha^{'}\to\infty$. Since $E$ is $mn$-complete, there exists $x\in E$ satisfying $x_\alpha\fc x$ as $\alpha_0\le\alpha\to\infty$. It follows from Proposition \ref{mn convergence positive} that $x_\alpha\downarrow0$ because of $x_\alpha\downarrow$ and $x_\alpha\mc 0$, and so we have $x=0$. Therefore, we get $x_\alpha\mc 0$. 
	
	$(iii)\Rightarrow(i)$ It is just the implication of Lemma \ref{order and mn-convergence}.
\end{proof}

\begin{corollary}\label{of + f implies o}
	Every $mn$-continuous and $mn$-complete Banach lattice $f$-algebra is order complete. 
\end{corollary}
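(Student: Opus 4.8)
The plan is to reduce order completeness directly to the characterization already established in \thmref{of-contchar}, using $mn$-completeness to produce a limit and the monotone clause of Proposition~\ref{mn convergence positive} to upgrade that limit to an order supremum. Recall that, by the definition given in the introduction, $E$ is order complete precisely when every net with $0\leq x_\alpha\uparrow\leq x$ admits a supremum in $E$; so it suffices to fix one such net and exhibit $\sup x_\alpha$.

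First I would invoke the implication $(i)\Rightarrow(ii)$ of \thmref{of-contchar}: since $E$ is $mn$-continuous, the increasing order-bounded net $(x_\alpha)$ is $mn$-Cauchy. Then, using that $E$ is $mn$-complete, this net must $mn$-converge to some $\bar x\in E$, that is $x_\alpha\mc\bar x$.

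Next I would exploit monotonicity to turn this $mn$-limit into an order supremum. The net $(x_\alpha)$ is increasing and $mn$-convergent to $\bar x$, so Proposition~\ref{mn convergence positive}$(ii)$ applies and yields $x_\alpha\oc\bar x$; moreover, the proof of that proposition in fact identifies $\bar x$ as the least upper bound, giving $x_\alpha\uparrow\bar x$. Hence $\sup x_\alpha=\bar x$ exists in $E$. As the net $0\leq x_\alpha\uparrow\leq x$ was arbitrary, $E$ is order complete.

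The argument is essentially a chaining of earlier results, so there is no serious obstacle. The only point that deserves care is that $mn$-completeness initially supplies merely an $mn$-limit rather than an order limit, and it is precisely Proposition~\ref{mn convergence positive}$(ii)$—the passage from $mn$-convergence of a \emph{monotone} net to order convergence with supremum—that bridges this gap; one should simply confirm that its monotonicity hypothesis is met, which is immediate since $(x_\alpha)$ is increasing by assumption.
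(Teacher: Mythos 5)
Your argument is correct and follows the paper's own proof essentially verbatim: apply Theorem~\ref{of-contchar}$(i)\Rightarrow(ii)$ to get that the increasing order-bounded net is $mn$-Cauchy, use $mn$-completeness to obtain an $mn$-limit, and then use Proposition~\ref{mn convergence positive}$(ii)$ to conclude $x_\alpha\uparrow\bar x$. No differences worth noting.
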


\begin{proof}
	Suppose $E$ is $mn$-continuous and $mn$-complete a Banach lattice $f$-algebra. For $y\in E_+$, put a net $0\leq x_\alpha\uparrow\leq y$ in $E$. By applying Theorem \ref{of-contchar} $(ii)$, the net $(x_\alpha)$ is $mn$-Cauchy. Thus, there exists an element $x\in E$ such that $x_\alpha\mc x$ because of $mn$-completeness.  Since $x_\alpha\uparrow$ and $x_\alpha\fc x$, it follows from Lemma \ref{mn convergence positive} that $x_\alpha\uparrow x$. Therefore, $E$ is order complete.
\end{proof}

The following proposition is an $mn$-version of \cite[Prop.4.2]{GX}.
\begin{proposition}
	Let $E$ be an $mn$-continuous and $mn$-complete Banach lattice $f$-algebra. Then every $f$-almost order bounded and order Cauchy net is $mn$-convergent.
\end{proposition}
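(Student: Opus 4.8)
The plan is to turn the order-Cauchy hypothesis into an $mn$-Cauchy one and then invoke completeness. Write $(x_\alpha)_{\alpha\in A}$ for the net. By definition, order-Cauchyness says exactly that the double net $(x_\alpha-x_{\alpha'})_{(\alpha,\alpha')\in A\times A}$ order converges to $0$. Since $E$ is $mn$-continuous, $z_\gamma\oc 0$ forces $z_\gamma\mc 0$; applying this to the difference net (a net over the directed set $A\times A$) gives $(x_\alpha-x_{\alpha'})\mc 0$, i.e.\ $(x_\alpha)$ is $mn$-Cauchy. Now $mn$-completeness of $E$ produces an $x\in E$ with $x_\alpha\mc x$, which is precisely the claim. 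This argument already settles the statement, and it uses only $mn$-continuity together with $mn$-completeness.

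To exhibit where $f$-almost order boundedness enters — and to strengthen the conclusion to norm convergence, mirroring the preceding proposition — I would finish differently after obtaining $(x_\alpha-x_{\alpha'})\mc 0$. As there, the difference net inherits $f$-almost order boundedness from $(x_\alpha)$, so Proposition \ref{$f$-almost} upgrades its $mn$-convergence to zero to $\lVert x_\alpha-x_{\alpha'}\rVert\to 0$; thus $(x_\alpha)$ is norm Cauchy. Since $E$ is a Banach lattice it is norm complete, whence $x_\alpha\to x$ in norm for some $x$, and Remark \ref{rem 1 basic}(ii) returns $x_\alpha\mc x$.

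The step needing the most care is the first passage: one must check that $mn$-continuity, stated for arbitrary nets, may legitimately be applied to the $A\times A$-indexed difference net, which is fine because that net order converges to zero by hypothesis. For the alternative finish, the delicate point is verifying that the difference net is genuinely $f$-almost order bounded; this rests on $|x_\alpha-x_{\alpha'}|\le|x_\alpha|+|x_{\alpha'}|$ together with the defining estimate $\lVert|x|-u_\varepsilon|x|\rVert\le\varepsilon$ and the monotonicity of $y\mapsto y-u_\varepsilon y$ on the positive cone, exactly as in the proof just above. Finally, the hypotheses are mildly redundant for the stated conclusion: the primary argument does not use $f$-almost order boundedness, while the alternative finish does not use $mn$-completeness.
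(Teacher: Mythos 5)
Your primary argument is correct and is essentially the paper's own proof: both pass to the double net $(x_\alpha-x_{\alpha'})_{(\alpha,\alpha')\in A\times A}$, use $mn$-continuity to conclude it is $mn$-null (so that $(x_\alpha)$ is $mn$-Cauchy), and finish by $mn$-completeness. You are also right that the paper's intermediate appeal to Proposition~\ref{$f$-almost} --- and with it the $f$-almost order boundedness hypothesis --- is redundant for the stated conclusion; it only serves to upgrade the convergence to norm convergence, as in your alternative finish (where, as you note, the inheritance of $f$-almost order boundedness by the difference net is the one point the paper also leaves unjustified).
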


\begin{proof}
	Let $(x_\alpha)$ be an $f$-almost order bounded $o$-Cauchy net. Then the net $(x_\alpha-x_{\alpha'})$ is $f$-almost order bounded and is order convergent to zero.	Since $E$ is $mn$-continuous, $x_\alpha-x_{\alpha'}\mc 0$. By using Proposition \ref{$f$-almost}, we have  $x_\alpha-x_{\alpha'}\nc 0$. Hence, we get that $(x_\alpha)$ is $mn$-Cauchy, and so it is $mn$-convergent because of $mn$-completeness.
\end{proof}

We now turn our attention to a topology on Banach lattice $f$-algebras. We show that $mn$-convergence in a Banach lattice $f$-algebra is topological. While $mo$- and $uo$-convergence need not be given by a topology, it was observed in \cite{DOT} that $un$-convergence is topological. Motivated from that definition, we give the following. Let $\varepsilon>0$ be given. For a non-zero positive vector $u\in E_+$, we put
$$
V_{u,\varepsilon}=\{x\in E:\lVert\lvert x\rvert u\lVert<\varepsilon\}.
$$
Let $\mathcal{N}$ be the collection of all the sets of this form. We claim that $\mathcal{N}$ is a base of neighborhoods of zero for some Hausdorff linear topology. It is obvious that $x_\alpha\mc0$ iff
every set of $\mathcal{N}$ contains a tail of this net, hence the $mn$-convergence is the convergence induced by the mentioned topology.

We have to show that $\mathcal{N}$ is a base of neighborhoods of zero. To show this we apply Theorem 3.1.10 \cite{R}. First, note that every element in $\mathcal{N}$ contains zero. Now, we show that for every two
elements of $\mathcal{N}$, their intersection is again in $\mathcal{N}$. Take any two set $V_{u_1,\varepsilon_1}$ and $V_{u_2,\varepsilon_2}$ in $\mathcal{N}$. Put $\varepsilon=\varepsilon_1\wedge\varepsilon_2$ and $u=u_1\vee u_2$. We show that $V_{u,\varepsilon}\subseteq V_{u_1,\varepsilon_1}\cap V_{u_2,\varepsilon_2}$. For any $x\in V_{u,\varepsilon}$, we have $\lVert \lvert x\rvert u\lVert<\varepsilon$. Thus, it follows from $ \lvert x\rvert u_1\leq  \lvert x\rvert u$ that
$$
\lVert\lvert x\rvert u_1\lVert\leq\lVert\lvert x\rvert u\lVert<\varepsilon\leq\varepsilon_1.
$$
Thus, we get $x\in V_{u_1,\varepsilon_1}$. By a similar way, we also have $x\in V_{u_2,\varepsilon_2}$.

It is not a hard job to see that $V_{u,\varepsilon}+V_{u,\varepsilon}\subseteq V_{u,2\varepsilon}$, so that for each $U\in \mathcal{N}$, there is another $V\in \mathcal{N}$ such that $V+V\subseteq U$. In addition, one can easily verify that for every $U\in \mathcal{N}$ and every scalar $\lambda$ with $\lvert \lambda\rvert\leq 1$, we have $λU\subseteq U$.

Now, we show that for each $U\in \mathcal{N}$ and each $y\in U$, there exists  $V\in \mathcal{N}$ with $y+V\subseteq U$. Suppose $y\in V_{u,\varepsilon}$. We should find $\delta>0$ and a non-zero $v\in E_+$ such that $y+V_{v,\delta}\subseteq V_{u,\varepsilon}$. Take $v:=u$. Hence, since $y\in V_{u,\varepsilon}$, we have $\lVert\lvert y\rvert u\rVert<\varepsilon$. Put $\delta:=\varepsilon-\lVert\lvert y\rvert u\rVert$.
We claim that $y+V_{v,\delta}\subseteq V_{u,\varepsilon}$. Let's take $x\in V_{v,\delta}$. We show that $y+x\in V_{u,\varepsilon}$. Consider the inuality $\lvert y+x\rvert u\leq\lvert y\rvert u+\lvert x\rvert u$. Then we have
$$
\lVert\lvert y+x\rvert u\rVert\leq \lVert\lvert y\rvert u\rVert+\lVert\lvert x\rvert u\rVert<\lVert\lvert y\rvert u\rVert+\delta=\varepsilon.
$$

Finally, we show that this topology is Hausdorff. It is enough to show that $\bigcap\mathcal{N}=\{0\}$. Suppose that it is not hold true, i.e., assume that $0\neq x\in  V_{u,\varepsilon}$ for all non-zero $u\in E_+$ and for all $\varepsilon>0$. In particular, take $x\in  V_{\lvert x\rvert,\varepsilon}$. Thus, we have $\lVert \lvert x\rvert^2\rVert<\varepsilon$. Since $\varepsilon$ is arbitrary, we get $\lvert x\rvert^2=0$, i.e., $x=0$ by using Theorem 142.3 \cite{Za}; a contradiction.

Similar to Lemma 2.1 and Lemma 2.2 \cite{KMT}, we have the following two lemmas. The proofs are analogous so that we omit them.
\begin{lemma}\label{mn-top 1}
	$V_{u,\varepsilon}$ is either contained in $[-u,u]$ or contains a non-trivial ideal.
\end{lemma}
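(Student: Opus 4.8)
The plan is to prove the statement by the same dichotomy used for the $un$-topology in \cite{KMT}, with the truncation $|x|\wedge u$ there replaced by the product $|x|u$ here. The first routine observation I would record is that $V_{u,\varepsilon}$ is \emph{solid}: if $|y|\le |x|$ and $x\in V_{u,\varepsilon}$, then $|y|u\le |x|u$ in the $f$-algebra, whence $\lVert |y|u\rVert\le \lVert |x|u\rVert<\varepsilon$ and $y\in V_{u,\varepsilon}$. Solidity is what lets us search for an ideal inside $V_{u,\varepsilon}$, and, as I note below, it also pins down which ideal can possibly occur.

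Next I would isolate the natural candidate, the multiplicative annihilator $J:=\{z\in E:\ |z|u=0\}$. This is an ideal: it is solid by the monotonicity just used, and $|z_1+z_2|u\le |z_1|u+|z_2|u=0$ gives closure under addition. Because $E$ is an Archimedean (hence commutative) $f$-algebra, disjointness and vanishing of products agree for positive elements: if $|z|\wedge u=0$ then $|z|u=0$ by the $f$-property, while conversely $(|z|\wedge u)^2\le |z|u=0$ forces $|z|\wedge u=0$ via Theorem 142.3 \cite{Za}. Thus $J=\{u\}^{d}$, the disjoint complement of $u$, which is in particular a band. Since $z\in J$ gives $\lVert |z|u\rVert=0<\varepsilon$, we have $J\subseteq V_{u,\varepsilon}$. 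This already disposes of one alternative: if $u$ is \emph{not} a weak order unit, then $\{u\}^{d}\neq\{0\}$ is a non-trivial ideal sitting inside $V_{u,\varepsilon}$, and we are done.

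It remains to handle the case when $u$ \emph{is} a weak order unit, and this is where I expect the real difficulty. Here $J=\{0\}$, and in fact no non-trivial ideal can lie in $V_{u,\varepsilon}$ at all: by positive homogeneity of the norm, an ideal $I\subseteq V_{u,\varepsilon}$ would contain every multiple $nz$ of each $z\in I$, forcing $n\lVert |z|u\rVert<\varepsilon$ for all $n$ and hence $z\in J=\{0\}$. Consequently the only way to finish is to establish the first alternative $V_{u,\varepsilon}\subseteq[-u,u]$ \emph{directly} from the norm bound $\lVert |x|u\rVert<\varepsilon$. This is precisely where the multiplicative setting parts company with \cite{KMT}: there the inequality $|x|\wedge u\le u$ is automatic and yields the interval inclusion for free, whereas $|x|u$ carries no built-in domination by $u$. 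To carry it through I would pass to the principal ideal $I_{u}$, which by Example \ref{int exam} is an $AM$-space with order unit $u$ under $\lVert\cdot\rVert_\infty$, transport the estimate $\lVert |x|u\rVert<\varepsilon$ into that norm, and try to read off $|x|\le u$. I anticipate that this last inference succeeds only under an additional calibration between $\varepsilon$ and $u$ (matching the normalisation $\lVert u\rVert_\infty=1$ of the unit in $I_u$), which is the genuinely delicate point hidden in the statement and the step I would scrutinise most carefully.
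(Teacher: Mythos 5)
Your preliminary steps are all correct and, in fact, they already contain the decisive observation. Solidity of $V_{u,\varepsilon}$, the inclusion $\{u\}^{d}\subseteq V_{u,\varepsilon}$ (which settles the case where $u$ is not a weak order unit), and the homogeneity argument showing that any ideal inside $V_{u,\varepsilon}$ must consist of elements $z$ with $|z|u=0$ are all fine. (For the identification $\{z:|z|u=0\}=\{u\}^{d}$ you need the $f$-algebra to be semiprime, i.e.\ to have no nonzero nilpotents; the paper tacitly assumes this elsewhere when it deduces $x=0$ from $|x|^2=0$ via Theorem 142.3 of \cite{Za}.) The gap is the step you defer to the end: deducing $V_{u,\varepsilon}\subseteq[-u,u]$ from $\lVert\,|x|u\,\rVert<\varepsilon$ when $u$ is a weak order unit. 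That step is not merely delicate --- it fails, and no calibration of $\varepsilon$ against $u$ rescues it. Take $E=C[0,1]$ with pointwise multiplication and the supremum norm, and $u(t)=t$, a weak order unit that is not a strong unit. The constant function $c\equiv\varepsilon/2$ satisfies $\lVert cu\rVert_\infty=\varepsilon/2<\varepsilon$, so $c\in V_{u,\varepsilon}$, yet $c\not\le u$ near $t=0$; and by your own homogeneity argument every ideal contained in $V_{u,\varepsilon}$ lies in $\{u\}^{d}=\{0\}$. So neither alternative of the dichotomy holds, and the lemma as stated is false.

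For comparison: the paper offers no proof at all, declaring the statement ``analogous'' to Lemma 2.1 of \cite{KMT}, and the analogy breaks exactly where you located the difficulty. In the $un$-setting the map $z\mapsto|z|\wedge u$ saturates at $u$, which is why the ideal generated by $(|x|-u)^{+}$ for a witness $x\in V_{u,\varepsilon}\setminus[-u,u]$ can be pushed inside $V_{u,\varepsilon}$; the map $z\mapsto|z|u$ is positively homogeneous, so scaling never saturates and only the annihilator ideal fits inside $V_{u,\varepsilon}$. Your instinct to scrutinise the last step was exactly right; the conclusion to draw is not that a cleverer transport to the $AM$-space $I_u$ will close the gap, but that the statement itself needs repair (an added hypothesis on $u$, or a different first alternative), which also undermines the subsequent Lemma \ref{mn-top 2} and the proposition built on them.
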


\begin{lemma}\label{mn-top 2}
	If $V_{u,\varepsilon}$ is contained in $[-u,u]$, then $u$ is a strong unit.
\end{lemma}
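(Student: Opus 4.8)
The plan is to use only the positive homogeneity of the absolute value and of the lattice norm. For any $x\in E$ and any scalar $t>0$ one has $\lvert tx\rvert u=t(\lvert x\rvert u)$, hence $\lVert \lvert tx\rvert u\rVert=t\,\lVert \lvert x\rvert u\rVert$. The guiding idea is that, for an arbitrary $x\in E$, I can shrink it by a suitable factor $t>0$ so that $tx$ lands inside $V_{u,\varepsilon}$; the hypothesis $V_{u,\varepsilon}\subseteq[-u,u]$ then forces $\lvert tx\rvert\leq u$, and rescaling back yields a domination $\lvert x\rvert\leq\lambda u$. Since $u$ is a non-zero positive vector and $x$ is arbitrary, this is exactly the order-unit (strong unit) property recalled in Example \ref{int exam}.

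Concretely, I would fix $x\in E$ and choose $t>0$ with $t\,\lVert \lvert x\rvert u\rVert<\varepsilon$: if $\lVert \lvert x\rvert u\rVert=0$ take $t=1$, and otherwise take $t=\varepsilon/\bigl(2\lVert \lvert x\rvert u\rVert\bigr)$. In either case $\lVert \lvert tx\rvert u\rVert=t\,\lVert \lvert x\rvert u\rVert<\varepsilon$, so that $tx\in V_{u,\varepsilon}\subseteq[-u,u]$. This means $t\lvert x\rvert=\lvert tx\rvert\leq u$, i.e. $\lvert x\rvert\leq t^{-1}u$, giving the desired $\lambda=t^{-1}$. As $x\in E$ was arbitrary, every element of $E$ is dominated by a multiple of $u$, so $u$ is a strong unit.

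I do not anticipate a real obstacle here: once the homogeneity identity $\lVert \lvert tx\rvert u\rVert=t\,\lVert \lvert x\rvert u\rVert$ is written down, the conclusion is a one-step scaling argument. The only minor point is to avoid dividing by zero when $\lVert \lvert x\rvert u\rVert=0$, which is handled by taking $t=1$ (then $x$ itself already lies in $V_{u,\varepsilon}$, whence $\lvert x\rvert\leq u$). It is worth noting that neither the $f$-algebra structure nor completeness of the norm is used for this direction; one only needs that $\lVert\cdot\rVert$ is a lattice norm and that $u>0$.
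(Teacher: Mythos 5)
Your proof is correct and is essentially the argument the paper intends: the paper omits the proof, deferring to the analogous Lemma 2.2 of \cite{KMT}, and the $mn$-analogue of that proof is exactly your scaling argument, using $\lVert\lvert tx\rvert u\rVert=t\lVert\lvert x\rvert u\rVert$ to push a small multiple of $x$ into $V_{u,\varepsilon}\subseteq[-u,u]$ and then rescaling to get $\lvert x\rvert\leq t^{-1}u$. Your handling of the degenerate case $\lVert\lvert x\rvert u\rVert=0$ is also fine.
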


\begin{proposition}
	Let $E$ be a Banach lattice $f$-algebra. Then if a neighborhood of $mn$-topology is norm bounded, then $E$ has a strong unit.
\end{proposition}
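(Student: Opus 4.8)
The plan is to reduce the statement to the two lemmas \lemref{mn-top 1} and \lemref{mn-top 2} that immediately precede it, using the fact that the sets $V_{u,\varepsilon}$ form a base of neighborhoods of zero for the $mn$-topology. First I would start from the hypothesis that some $mn$-neighborhood $U$ of zero is norm bounded. Since $\mathcal{N}$ is a base at zero, $U$ contains a basic neighborhood $V_{u,\varepsilon}$ for some non-zero $u\in E_+$ and some $\varepsilon>0$. A subset of a norm bounded set is again norm bounded, so $V_{u,\varepsilon}$ is itself norm bounded; this is the object I will feed into the dichotomy.

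Next I would apply \lemref{mn-top 1}, which asserts that $V_{u,\varepsilon}$ is either contained in the order interval $[-u,u]$ or contains a non-trivial ideal. The heart of the argument is ruling out the second alternative, and this is the step I expect to be the main (and really only) obstacle. The key observation is that a non-trivial ideal can never be norm bounded: an ideal $I$ is in particular a linear subspace, so if it contains a non-zero element $x$ then it contains $nx$ for every $n\in\mathbb{N}$, whence $\lVert nx\rVert=n\lVert x\rVert\to\infty$. Thus no norm bounded set can contain a non-trivial ideal. Since $V_{u,\varepsilon}$ is norm bounded, the second horn of the dichotomy is impossible.

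Therefore $V_{u,\varepsilon}\subseteq[-u,u]$, and at this point \lemref{mn-top 2} applies directly and gives that $u$ is a strong unit. Hence $E$ possesses a strong unit, which is exactly the conclusion. Everything beyond the unboundedness remark is just packaging of the two cited lemmas, so the write-up should be short: establish that a norm bounded neighborhood forces some $V_{u,\varepsilon}$ to be norm bounded, eliminate the ideal case by the scalar-multiple argument, and quote \lemref{mn-top 2}.
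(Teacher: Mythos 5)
Your proposal is correct and follows essentially the same route as the paper's own proof: reduce to a basic neighborhood $V_{u,\varepsilon}$, use Lemma~\ref{mn-top 1} to get $V_{u,\varepsilon}\subseteq[-u,u]$, and conclude with Lemma~\ref{mn-top 2}. You merely make explicit two steps the paper leaves implicit (passing from an arbitrary norm bounded neighborhood to a basic one, and ruling out the ideal alternative via the unboundedness of scalar multiples), both of which are accurate.
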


\begin{proof}
	Assume $V_{u,\varepsilon}$ is norm bounded for some $u\in E_+$ and $\varepsilon>0$. By applying Lemma \ref{mn-top 1}, we have $V_{u,\varepsilon}$ is contained in $[-u,u]$. So, by using Lemma \ref{mn-top 2}, $u$ is a strong unit.
\end{proof}

Consider Example \ref{int exam}. Then we know that $(I_x,\lVert\cdot\rVert_\infty)$ is a Banach lattice $f$-algebra. $I_x$ equipped with the norm $\lVert\cdot\rVert_\infty$ is lattice isometric to $C(K)$ for some compact Hausdorff space $K$, with $x$ corresponding to the constant one function $\mathsf{1}$; see for example Theorem 3.4 and Theorem 3.6. If $x$ is a strong unit in $E$ then $I_x=E$. It is easy to see that in this case $\lVert\cdot\rVert_\infty$ is equivalent to the original norm and $E$ is lattice and norm isomorphic to $C(K)$. It is easy to see that norm convergence implies $mn$-convergence, and so, in general, norm topology is stronger than $mn$-topology.


\end{document}